\numberwithin{equation}{section}
\begin{document}
	\theoremstyle{plain}
	\newtheorem{thm}{Theorem}[section]
	\newtheorem{lem}[thm]{Lemma}
	\newtheorem{cor}[thm]{Corollary}
	\newtheorem{cor*}[thm]{Corollary*}
	\newtheorem{prop}[thm]{Proposition}
	\newtheorem{prop*}[thm]{Proposition*}
	\newtheorem{conj}[thm]{Conjecture}
	\theoremstyle{definition}
	\newtheorem{construction}{Construction}
	\newtheorem{notations}[thm]{Notations}
	\newtheorem{question}[thm]{Question}
	\newtheorem{prob}[thm]{Problem}
	\newtheorem{rmk}[thm]{Remark}
	\newtheorem{remarks}[thm]{Remarks}
	\newtheorem{defn}[thm]{Definition}
	\newtheorem{claim}[thm]{Claim}
	\newtheorem{assumption}[thm]{Assumption}
	\newtheorem{assumptions}[thm]{Assumptions}
	\newtheorem{properties}[thm]{Properties}
	\newtheorem{exmp}[thm]{Example}
	\newtheorem{comments}[thm]{Comments}
	\newtheorem{blank}[thm]{}
	\newtheorem{observation}[thm]{Observation}
	\newtheorem{defn-thm}[thm]{Definition-Theorem}
	\newtheorem*{Setting}{Setting}

	\newcommand{\sA}{\mathscr{A}}
	\newcommand{\sB}{\mathscr{B}}
	\newcommand{\sC}{\mathscr{C}}
	\newcommand{\sD}{\mathscr{D}}
	\newcommand{\sE}{\mathscr{E}}
	\newcommand{\sF}{\mathscr{F}}
	\newcommand{\sG}{\mathscr{G}}
	\newcommand{\sH}{\mathscr{H}}
	\newcommand{\sI}{\mathscr{I}}
	\newcommand{\sJ}{\mathscr{J}}
	\newcommand{\sK}{\mathscr{K}}
	\newcommand{\sL}{\mathscr{L}}
	\newcommand{\sM}{\mathscr{M}}
	\newcommand{\sN}{\mathscr{N}}
	\newcommand{\sO}{\mathscr{O}}
	\newcommand{\sP}{\mathscr{P}}
	\newcommand{\sQ}{\mathscr{Q}}
	\newcommand{\sR}{\mathscr{R}}
	\newcommand{\sS}{\mathscr{S}}
	\newcommand{\sT}{\mathscr{T}}
	\newcommand{\sU}{\mathscr{U}}
	\newcommand{\sV}{\mathscr{V}}
	\newcommand{\sW}{\mathscr{W}}
	\newcommand{\sX}{\mathscr{X}}
	\newcommand{\sY}{\mathscr{Y}}
	\newcommand{\sZ}{\mathscr{Z}}
	\newcommand{\bZ}{\mathbb{Z}}
	\newcommand{\bN}{\mathbb{N}}
	\newcommand{\bQ}{\mathbb{Q}}
	\newcommand{\bC}{\mathbb{C}}
	\newcommand{\bR}{\mathbb{R}}
	\newcommand{\bH}{\mathbb{H}}
	\newcommand{\bD}{\mathbb{D}}
	\newcommand{\bE}{\mathbb{E}}
	\newcommand{\bV}{\mathbb{V}}
	\newcommand{\cV}{\mathcal{V}}
	\newcommand{\cF}{\mathcal{F}}
	\newcommand{\bfM}{\mathbf{M}}
	\newcommand{\bfN}{\mathbf{N}}
	\newcommand{\bfX}{\mathbf{X}}
	\newcommand{\bfY}{\mathbf{Y}}
	\newcommand{\spec}{\textrm{Spec}}
	\newcommand{\dbar}{\bar{\partial}}
	\newcommand{\ddbar}{\partial\bar{\partial}}
	\newcommand{\redref}{{\color{red}ref}}
	
	\title[] {Minimal extension property of direct images}

	\author[Chen Zhao]{Chen Zhao}
	\email{czhao@ustc.edu.cn}
	\address{School of Mathematical Sciences,
		University of Science and Technology of China, Hefei, 230026, China}

	\begin{abstract}
Given a projective morphism $f:X\to Y$ from a complex space to a complex manifold, we prove the Griffiths semi-positivity and minimal extension property of the direct image sheaf $f_\ast(\sF)$. Here, $\sF$ is a coherent sheaf on $X$, which consists of the Grauert-Riemenschneider dualizing sheaf, a multiplier ideal sheaf, and a variation of Hodge structure (or more generally, a tame harmonic bundle). 
	\end{abstract}
	
	\maketitle
	
\section{introduction}
Given a holomorphic map between projective manifolds $f:X\to Y$ and a holomorphic line bundle $L$ endowed with a singular Hermitian metric $h$, the positivity of $f_\ast(\omega_{X/Y}\otimes L\otimes\sI(h))$ has been a topic of great interest in decades. This positivity problem plays crucial roles in many subjects in complex algebraic geometry such as Iitaka $C_{n,m}$ conjecture \cite{Kawamata1985,Viehweg1983,CP2017} and the moduli of projective varieties \cite{Viehweg1995}.
 
 A significant breakthrough in recent years has been the recognition of Nakano semi-positivity (also known as the minimal extension property) of $f_\ast(\omega_{X/Y}\otimes L\otimes\sI(h))$, attributed to Berndtsson \cite{Berdtsson2009}, Paun-Takayama \cite{PT2018}, Hacon-Popa-Schnell \cite{HPS2018}. This observation has enabled Cao-Paun \cite{CP2017} to solve the Iitaka $C_{n,m}$ conjecture over abelian varieties (see also \cite{HPS2018}).
The purpose of this article is to demonstrate that the minimal extension property holds for a much broader class of direct images. In this case, the holomorphic line bundle $(L, h)$ could be replaced by degenerate bundles, such as a variation of Hodge structure or, a tame harmonic bundle in a broader context (corresponding to certain parabolic Higgs bundles, see Simpson \cite{Simpson1988, Simpson1990} and Mochizuki \cite{Mochizuki20072, Mochizuki20071}).
\subsection{Main result}
Let $f:X\rightarrow Y$ be a projective surjective morphism from a complex space $X$\footnote{All complex space is assumed to be reduced and irreducible.} to a complex manifold $Y$. Let $X^o\subset X$ be a dense Zariski open subset and $(E,h)$ be a holomorphic vector bundle endowed with a singular Hermitian metric (Definition \ref{defn_shmonvb}). 
Let $S_X(E,h)$ be the sheaf on $X$ defined as follows. Let $U\subset X$ be an open subset. The space $S_X(E,h)(U)$ consists of holomorphic $E$-valued $(n,0)$-forms $\alpha$ on $U\cap X^o$ such that 
$\{\alpha,\alpha\}$ is locally integrable near every point of $U$. 
We define $S_{X/Y}(E,h)$ as $S_X(E,h)\otimes f^\ast(\omega_Y^{-1})$. 
\begin{thm}\label{thm_main}
If the holomorphic vector bundle $(E,h)$ is tame on $X$ (Definition \ref{defn_tame_Hermitian_bundle}) and $\Theta_h(E)\geq_{\rm Nak}^s 0$ on $X^o$ (Definition \ref{defn_singularNakanopositive}), then $S_{X/Y}(E,h)$ is a coherent sheaf on $X$ and the direct image sheaf 
		$$\sF=f_\ast (S_{X/Y}(E,h))$$
		has a canonical singular Hermitian metric which is Griffiths semi-positive and satisfies the minimal extension property.
	\end{thm}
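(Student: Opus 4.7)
The plan is to follow the Berndtsson--Paun--Takayama--Hacon--Popa--Schnell template for $L^2$ metrics on direct images of adjoint line bundles, extended to the higher-rank tame situation by means of Simpson--Mochizuki's prolongation theory. The first step is coherence of $S_X(E,h)$. Take a log resolution $\pi:\tilde X\to X$ of the pair $(X,X\setminus X^o)$, so that $D:=\pi^{-1}(X\setminus X^o)$ is a simple normal crossing divisor. The tameness hypothesis lets me prolong $(E,\pi^\ast h)|_{\tilde X\setminus D}$ across $D$ as a filtered parabolic bundle, and on $\tilde X$ the sheaf $S_{\tilde X}(E,\pi^\ast h)$ is identified with $\omega_{\tilde X}$ twisted by the weight-zero piece of this prolongation, which is coherent. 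Properness of $\pi$ together with $\pi_\ast S_{\tilde X}(E,\pi^\ast h)=S_X(E,h)$ (a Grauert--Riemenschneider type identity for the $L^2$ sheaf) then gives coherence of $S_X(E,h)$, and tensoring with the invertible sheaf $f^\ast\omega_Y^{-1}$ yields coherence of $S_{X/Y}(E,h)$.

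For the canonical metric on $\sF=f_\ast S_{X/Y}(E,h)$, I would use the fiberwise $L^2$-pairing: for $y$ in the regular locus of $f$ and a section $\sigma\in\sF(U)$, set $|\sigma(y)|^2:=\int_{X_y\cap X^o}\{\sigma|_{X_y},\sigma|_{X_y}\}$. Tameness guarantees finiteness outside a pluripolar set and that this extends as a singular Hermitian metric on $\sF$. Griffiths semi-positivity is then obtained by approximating $h$ on compact subsets of $X^o$ by a decreasing family of smooth Nakano semi-positive metrics $h_\varepsilon$ (possible by the singular Nakano semi-positivity hypothesis), applying Berndtsson's curvature formula to each smooth direct image $f_\ast(\omega_{X/Y}\otimes E,h_\varepsilon)$ on open sets exhausting the regular locus of $f$, and passing to the decreasing limit. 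Lower semicontinuity of plurisubharmonicity applied to $\log|s|^2_{h_\sF^\ast}$ for local dual sections $s$ of $\sF$ transfers Griffiths semi-positivity to the limit metric, and tameness ensures nothing is lost across the locus where $f$ degenerates.

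The main obstacle is the minimal extension property. This amounts to a higher-rank, tame Ohsawa--Takegoshi type statement: given $y_0\in Y$ and $v\in\sF_{y_0}$ of finite norm, find a holomorphic extension $\tilde v$ of $v$ on a polydisc $U\ni y_0$ with
\[
\int_U |\tilde v|^2_{h_\sF}\,dV \le C\,|v|^2_{y_0}
\]
for a universal constant. Representing $v$ by a holomorphic $E$-valued form $\alpha$ on $X_{y_0}\cap X^o$ with $\int_{X_{y_0}}\{\alpha,\alpha\}<\infty$, the task is to extend $\alpha$ to $f^{-1}(U)$ with controlled $L^2$-norm with respect to $h$. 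I would carry this out on the log resolution $\tilde X$, where the problem becomes an Ohsawa--Takegoshi extension for $\omega_{\tilde X}$ twisted by the weight-zero prolongation of $E$ from the strict transform of $X_{y_0}$. The principal difficulty is to run the Ohsawa--Takegoshi machinery when the metric degenerates simultaneously along $D$ and along the discriminant of $f$; my strategy is to engineer a plurisubharmonic weight that dominates both singularities, combined with Mochizuki's polynomial growth estimates for sections near $D$ to ensure the resulting extension descends to an honest section of $\sF$ on $U$ realizing the required $L^2$ bound.
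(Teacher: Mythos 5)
There are genuine gaps, and they occur at all three stages. For coherence, you invoke Simpson--Mochizuki prolongation and claim that $S_{\tilde X}(E,\pi^\ast h)$ is $\omega_{\tilde X}$ twisted by the weight-zero piece of a parabolic extension. But under the hypotheses of the theorem $(E,h)$ is merely a singular Hermitian bundle that is tame in the sense of Definition \ref{defn_tame_Hermitian_bundle} (a polynomial-growth comparison $(\sum\|\pi^\ast f_i\|^2)^c h_Q\lesssim h'_Q$ against some smooth auxiliary bundle $(Q,h_Q)$) and Nakano semi-positive in the sense of approximations; there is no Higgs field, no pluriharmonicity, and no parabolic filtration to prolong, so the filtered-bundle identification you rely on simply does not exist in this generality. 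The paper instead proves coherence (Proposition \ref{prop_S_coherent}) by the $L^2$ route: solve $\bar\partial$ with the weight $2(n+k+rm)\log|z-x|+|z|^2$ using Cataldo's H\"ormander estimate for $\geq^s_{\rm Nak}$ metrics, and use the tameness bound $|z_1\cdots z_r|^{2m}h_Q\lesssim h$ only to conclude that the solution lies in $m_{\tilde X,x}^{k+1}Q$, then finish with Krull's theorem.

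For the positivity and the minimal extension property your plan also does not go through as stated. Definition \ref{defn_singularNakanopositive} does not furnish a decreasing family of genuinely Nakano semi-positive smooth metrics: it only gives, on relatively compact sets, increasing $C^2$ approximations whose curvature is bounded below by $-\lambda_{j,s}\omega\otimes{\rm Id}_E$ with $\lambda_{j,s}\to0$ a.e.; moreover $E$ need not extend across $X\setminus X^o$, which meets every fiber, so Berndtsson's curvature formula for $f_\ast(\omega_{X/Y}\otimes E,h_\varepsilon)$ is not available even over the locus where $f$ is smooth. Likewise there is no need to ``engineer'' a new Ohsawa--Takegoshi theorem on the log resolution: the required statement is exactly the Guan--Mi--Yuan optimal extension theorem (Theorem \ref{thm_OTextension}), which applies verbatim on $X^o$ under the hypothesis $\Theta_h(E)\geq^s_{\rm Nak}0$, and the resulting form automatically defines a section of $S_X(E,h)$ because membership only requires local integrability of $\{\beta,\beta\}$, which the global $L^2$ bound provides; no extension of data across $X\setminus X^o$ is needed. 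What your sketch is actually missing is the step you dismiss with ``tameness guarantees \dots this extends as a singular Hermitian metric on $\sF$'': the heart of the paper's argument is the normal-families statement (Proposition \ref{prop_compactness}), where tameness is used to show that sections with $\int_V|s|^2_H\,d\mu\leq K$ have coefficients $b_i$ with $(z_1\cdots z_r)^m b_i$ holomorphic and uniformly $L^2$-bounded on the resolution, giving compactness of $S_K$; this is what yields the local upper bound and upper semi-continuity of $\psi=\log|g|_{H^\ast}$ near $Z$, so that $\psi$ (hence the metric, its Griffiths semi-positivity via the mean-value inequality derived from Theorem \ref{thm_OTextension}, and the minimal extension property) extends across $Z$. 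Without an argument of this type your construction produces a metric only over $Y\setminus Z$ and the theorem is not proved.
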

\begin{rmk}
	We adopt Caltaldo's concept of Nakano semi-positivity for singular Hermitian metrics (see Definition \ref{defn_singularNakanopositive}) because it allows for the validity of H\"ormander's estimate (\cite[Proposition 4.1.1]{CataldoAndrea1998}, also see  \cite[Th\'eor\`eme 5.1]{Demailly1982}) and the optimal Ohsawa-Takegoshi extension theorem (Guan-Zhou \cite{Guan-Zhou2015}, Guan-Mi-Yuan \cite{GMY2023}). Defining "Nakano semi-positivity" in a way that does not rely on approximations using \( C^2 \) metrics, while still ensuring H\"ormander's estimate and the optimal Ohsawa-Takegoshi extension theorem, presents an intriguing challenge. Relevant works addressing this include \cites{KP2021,DNW2021,DNWZ2023,DNZZ2024,In2022,PT2018,Rau2015}.
\end{rmk}
\begin{rmk}
	Since we are interested in the case when the vector bundle $E$ arises from a variation of Hodge structure or a tame harmonic bundle, we do not require $E$ to have a holomorphic extension to $X$. It is possible that there will be some fiber of $f$ where $E$ is nowhere defined, and $h$ may not extend to $X$. This presents the main difficulty in this article compared to known works. 
	The primary contribution of this article is the introduction of the "tame" condition (Definition \ref{defn_tame_Hermitian_bundle}), which is motivated by the theory of degeneration of Hodge structure (Schmid \cite{Schmid1973}, Cattani-Kaplan-Schmid \cite{Cattani_Kaplan_Schmid1986}) and the theory of tame harmonic bundles (Simpson \cite{Simpson1990}, Mochizuki \cite{Mochizuki20072, Mochizuki20071}). Roughly speaking, the tameness of the Hermitian vector bundle $(E,h)$ means that the dual metric $h^\ast$ has at most polynomial growth at every point on $X$. This condition allows for the use of techniques used in Paun-Takayama \cite{PT2018} and Hacon-Popa-Schnell \cite{HPS2018} on the degenerate loci of $(E,h)$. 
We want to point out that if $(E,h)$ is Nakano semi-positive on $X^o$, then it is also tame on $X^o$. Therefore, the main concern of the tameness condition in Theorem \ref{thm_main} is the asymptotic behavior of the metric $h$ on the boundary $X\backslash X^o$.
\end{rmk}
\begin{rmk}
	The construction of $S_X(E,h)$ was  introduced in \cite{SZ2022} and \cite{SC2021}. It offers a convenient way to combine Hodge-theoretic objects, like the Koll\'ar-Saito $S$-sheaf, with transcendental objects, such as the multiplier ideal sheaf. Typically, under certain conditions, $S_X(E,h)$ exhibits good Hodge-theoretic properties, such as  Koll\'ar's package (see \cite{SZ2022}), as well as good transcendental properties, including the strong openness property and the Ohsawa-Takegoshi extension property (see \cite{SC2021}).
\end{rmk}
\subsection{Example: multiplier ideal sheaf}
The first example is the case when the bundle $E$ do not degenerate. When $X=X^o$ ($X$ is smooth in particular), $E$ is a holomorphic line bundle with a singular Hermitian metric of semi-positive curvature, the aforementioned theorem implies the positivity result of the direct image sheaf $f_\ast (\omega_{X/Y}\otimes E\otimes \sI(h))$, as proven by Paun-Takayama \cite{PT2018} and Hacon-Popa-Schnell \cite[Theorem 21.1]{HPS2018}. If $E$ is of higher rank and $h$ is a metric satisfying conditions in Theorem \ref{thm_main}, then using Caltaldo's notation $E(h)$ (consisting of locally $L^2$ holomorphic sections in $E$), $f_\ast (\omega_{X/Y}\otimes E(h))$ has a canonical singular Hermitian metric which is Griffiths semi-positive and satisfies the minimal extension property.
\subsection{Example: multiplier $S$-sheaf}\label{section_exp_S_sheaf}
Let $\mathbb{V}$ be a variation of Hodge structure on a regular Zariski open subset of a projective variety $X$. Koll\'ar \cite{Kollar1986_2} introduced a coherent sheaf $S(IC_X(\mathbb{V}))$ that generalizes the dualizing sheaf. He conjectured that $S(IC_X(\mathbb{V}))$ satisfies Koll\'ar's package (including the torsion-freeness, the  injectivity theorem, Koll\'ar's vanishing theorem and the decomposition theorem). This conjecture was subsequently proven by Saito \cite{MSaito1991} using the theory of mixed Hodge modules. Saito's proof is based on the observation that $S(IC_X(\mathbb{V}))$ represents the highest index Hodge component of the intermediate extension $IC_X(\mathbb{V})$ as a Hodge module. In \cite{SZ2022}, the authors provide a new proof of Koll\'ar's conjecture using the $L^2$-method. This is based on their observation that $S(IC_X(\mathbb{V}))$ is isomorphic to some $S_X(E,h)$ for certain Hermitian bundle $(E,h)$ arising from the variation of Hodge structure (see below).
The $S$-sheaf has played a crucial role in the application of Hodge module theory to complex algebraic geometry (see \cite{Popa2018} for a comprehensive survey).

Let $X$ be a complex space and $X^o \subset X_{\rm reg}$ a Zariski open subset. Let $\bV := (\cV, \nabla, \cF^\bullet, Q)$ be a polarized complex variation of Hodge structure on $X^o$. To establish the Nadel vanishing theorem for $S(IC_X(\bV))$, the authors of \cite{SC2021} introduce a multiplier $S$-sheaf, denoted as $S(IC_X(\bV),\varphi)$,  a combination of the $S$-sheaf $S(IC_X(\bV))$ and the multiplier ideal sheaf associated with a quasi-plurisubharmonic (quasi-psh) function $\varphi:X\to[-\infty,\infty)$.
Let $h_Q$ be the Hodge metric defined as $(u,v)_{h_Q} := Q(Cu, \overline{v})$, where $Q$ is the polarization of $\bV$ and $C$ is the Weil operator. The multiplier $S$-sheaf is then defined as
\[
S(IC_X(\bV),\varphi) := S_X(S(\bV), e^{-\varphi}h_Q),
\]
where $S(\bV) := \cF^{\max\{k|\cF^k\neq0\}}$ is the top indexed nonzero piece of the Hodge filtration $\cF^\bullet$. This type of sheaf possesses several good properties, such as the strong openness property and the Ohsawa-Takegoshi extension property. Additionally, $S(IC_X(\bV),0) = S(IC_X(\bV))$. For more details on the multiplier $S$-sheaf and its relation to $S(IC_X(\bV))$ and $\sI(\varphi)$, readers may refer to \cite{SC2021}.

Let $S_{X/Y}(IC_X(\bV),\varphi):=S(IC_X(\bV),\varphi)\otimes f^\ast(\omega_Y^{-1})$. As a consequence of Theorem \ref{thm_Higgs}, we obtain the following.
\begin{thm}\label{thm_multi_S}
	Let $(E,e^{-\varphi}h)$ be a Hermitian vector bundle on $X$ such that $\varphi$ is a quasi-psh function on $X$ and $h$ is a smooth metric. Assume that $\sqrt{-1}\ddbar(\varphi)+\sqrt{-1}\Theta_h(E)\geq_{\rm Nak}0$. Let $f:X\to Y$ be a surjective projective morphism to a complex manifold $Y$. Then 
	$$f_\ast\left(S_{X/Y}(IC_X(\bV),\varphi)\otimes E\right)$$
	has a singular Hermitian metric which is Griffiths semi-positive and satisfies the minimal extension property. 
\end{thm}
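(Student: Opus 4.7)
The plan is to reduce Theorem \ref{thm_multi_S} to Theorem \ref{thm_Higgs} by applying the latter to the tensor Hermitian bundle $(F,g) := (S(\bV) \otimes E|_{X^o},\, (e^{-\varphi} h_Q) \otimes h)$ on $X^o$. Since $E$ is globally defined on $X$ and $h$ is smooth, the $L^2$-condition defining $S_X$ produces a natural sheaf isomorphism
\[
S_X(F,g) \;\cong\; S_X(S(\bV), e^{-\varphi} h_Q) \otimes E \;=\; S(IC_X(\bV),\varphi) \otimes E,
\]
and twisting by $f^\ast \omega_Y^{-1}$ gives the relative version. Hence Theorem \ref{thm_multi_S} will follow once Theorem \ref{thm_Higgs} is applied to $(F,g)$.

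It then remains to verify the hypotheses of Theorem \ref{thm_Higgs} for $(F,g)$. For tameness on $X$, observe that the dual metric factors as $g^\ast = e^{\varphi}(h_Q^\ast \otimes h^\ast)$: the factor $e^{\varphi}$ is locally bounded since $\varphi$ is quasi-psh, $h^\ast$ is locally bounded since $h$ is smooth on $X$, and polynomial growth of the dual Hodge metric $h_Q^\ast$ at $X \setminus X^o$ is the classical content of Schmid's nilpotent orbit theorem and the asymptotic analysis of Cattani-Kaplan-Schmid. For singular Nakano semi-positivity on $X^o$, the Chern curvature of $(F,g)$ decomposes as
\[
\sqrt{-1}\,\Theta_g(F) = \sqrt{-1}\,\Theta_{h_Q}(S(\bV)) \otimes \mathrm{id}_E + \mathrm{id}_{S(\bV)} \otimes \bigl(\sqrt{-1}\,\Theta_h(E) + \sqrt{-1}\,\ddbar\varphi \cdot \mathrm{id}_E\bigr).
\]
The second summand is singular Nakano semi-positive by hypothesis. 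The first is smoothly Nakano semi-positive on $X^o$ by Griffiths' curvature formula: the adjoint Higgs field vanishes on the top Hodge piece $\mathcal{F}^{\max} = S(\bV)$, so $\sqrt{-1}\,\Theta_{h_Q}(S(\bV)) = \sqrt{-1}\sum_{j,k} \theta_k^\ast \theta_j\, dz^j \wedge d\bar z^k$, whose Nakano-Hermitian form on $u = \sum_j u^j \otimes \partial/\partial z^j$ evaluates to $\|\sum_j \theta_j u^j\|_{h_Q}^2 \geq 0$. Since Caltaldo's singular Nakano semi-positivity is preserved under tensor product with a smooth Nakano semi-positive bundle and under sum, one concludes $\sqrt{-1}\,\Theta_g(F) \geq_{\mathrm{Nak}}^{s} 0$.

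Applying Theorem \ref{thm_Higgs} to $(F,g)$ then yields the coherence of $S_{X/Y}(F,g)$ together with a canonical Griffiths semi-positive singular Hermitian metric on $f_\ast S_{X/Y}(F,g)$ satisfying the minimal extension property; the sheaf identification above rewrites this as the desired statement for $f_\ast(S_{X/Y}(IC_X(\bV),\varphi) \otimes E)$. The main subtlety I anticipate is the interplay of the curvature decomposition with Caltaldo's singular framework (Definition \ref{defn_singularNakanopositive})—specifically, stability of singular Nakano semi-positivity under addition of a smooth Nakano semi-positive form and under tensor with a smooth Nakano semi-positive factor—which should follow from the regularization built into the definition. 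All heavier analytic input (H\"ormander estimates, optimal Ohsawa-Takegoshi, and control along the degeneration locus of the VHS) is already packaged inside Theorem \ref{thm_Higgs}.
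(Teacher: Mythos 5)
Your proposal is essentially the paper's own derivation: the paper likewise applies its main theorem to $(S(\bV)\otimes E,\,e^{-\varphi}h_Q h)$, citing Griffiths' curvature formula for Nakano semi-positivity of $(S(\bV),h_Q)$, the Schmid and Cattani--Kaplan--Schmid norm estimates for tameness, and Lemma \ref{lem_L2_tensor} for the identification $S(IC_X(\bV),\varphi)\otimes E\simeq S_X(S(\bV)\otimes E,e^{-\varphi}h_Q h)$. The only correction is that the theorem whose hypotheses you actually verify (tameness plus $\Theta\geq_{\rm Nak}^s 0$) is Theorem \ref{thm_main}, not Theorem \ref{thm_Higgs} (which takes parabolic Higgs bundle data on a smooth $(X,D)$ as input); with that relabeling your reduction coincides with the paper's argument.
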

In particular, if $E$ is a holomorphic vector bundle on $X$ endowed with a smooth Hermitian metric with Nakano semi-positive curvature (i.e., $\varphi=0$ in Theorem \ref{thm_multi_S}), then $f_\ast\left(S_{X/Y}(IC_X(\bV))\otimes E\right)$ has a singular Hermitian metric that is Griffiths semi-positive and satisfies the minimal extension property. This generalizes the result of Schnell-Yang \cite{SY2023} to the relative case.
\subsection{Example: parabolic Higgs bundle}\label{section_exp_Higgs}
The concept of the multiplier $S$-sheaf, as shown in the previous example, can be extended to the framework of non-abelian Hodge theory. This extension of the multiplier $S$-sheaf is elaborated on in \cite{SZ2022}.

Let $X$ be a smooth, projective variety, and let $D$ be a reduced simple normal crossing divisor on $X$. Let's consider a locally abelian parabolic Higgs bundle $(H,\{{_E}H\}_{E\in{\rm Div}_D(X)},\theta)$ on $(X,D)$. This bundle consists of the following data: 
\begin{itemize}
	\item A locally abelian parabolic vector bundle $(H,\{{_E}H\}_{E\in{\rm Div}_D(X)})$ with parabolic structures on $D$. Here, the filtration $\{{_E}H\}$ is indexed by the set ${\rm Div}_D(X)$, which consists of $\bR$-divisors whose support lies in $D$.
	\item A Higgs field $\theta:H|_{X\backslash D}\to H|_{X\backslash D}\otimes \Omega_{X\backslash D}$, which has regular singularity along $D$. 
\end{itemize}
This parabolic Higgs bundle is required to have vanishing parabolic Chern classes and to be polystable with respect to an ample line bundle $A$ on $X$. 

The main focus of this study is to examine a specific extension, denoted as $P_{E,(2)}(H)$, of $H|_{X\backslash D}$. To define this extension, let $E$ be an $\mathbb{R}$-divisor supported on $D$. We denote ${_{<E}}H$ as $\cup_{E'<E}{_{E'}}H$.
The coherent sheaf $P_{E,(2)}(H)$ is determined by the following conditions.
\begin{enumerate}
	\item ${_{<E}}H\subset P_{E,(2)}(H)\subset {_{E}}H$. 
	\item Let $x$ be a point in $D$ and $(U;z_1,\dots, z_n)$ be holomorphic local coordinates on some open neighborhood $U$ of $x$ in $X$, such that $D=\{z_1\cdots z_r=0\}$. We denote $D_i=\{z_i=0\}$ for $i=1,\dots,r$.  Now, let $\{W_{m,i}({_{E}}H)\}_{m\in\mathbb{Z}}$ be the monodromy weight filtration on ${_{E}}H|_U$ at $x$, with respect to the nilpotent part of the residue map ${\rm Res}_{D_i}(\theta)$ of the Higgs field along $D_i$. Then, we have:
	\begin{align}
		P_{E,(2)}(H)|_U={_{<E}}H+\bigcap_{i=1}^rW_{-2,i}({_{E}}H).
	\end{align}
\end{enumerate}
When $E=D$, $P_{D,(2)}(H)$ represents the sheaf of $L^2$-holomorphic sections with coefficients in $H$. This construction was originally introduced by S. Zucker \cite{Zucker1979} for algebraic curves, and it involves $H$ arising from a variation of Hodge structure. Consequently, it is a significant subject of study in the context of $L^2$-cohomology of a variation of Hodge structure.

To extend Zucker's construction \cite{Zucker1979} to higher-dimensional bases and non-canonical indexed extensions, we introduce $P_{E,(2)}(H)$. Specifically, when $E\geq0$, $P_{D-E,(2)}(H)$ combines elements from both $P_{D,(2)}(H)$ and the multiplier ideal sheaf associated with $E$. This aspect makes $P_{E,(2)}(H)$ more convenient in applications where $E\neq D$. It can be proven that $P_{E,(2)}(H)$ is always locally free.

According to the non-abelian Hodge theory of Simpson \cite{Simpson1988, Simpson1990} and Mochizuki \cite{Mochizuki2006, Mochizuki20071}, a $\mu_A$-polystable regular parabolic flat bundle $(V, \{{_E}V\}_{E\in{\rm Div}_D(X)}, \nabla)$ is associated with the parabolic Higgs bundle $(H,\{{_E}H\}_{E\in{\rm Div}_D(X)},\theta)$. Moreover, there exists an isomorphism between the $C^\infty$ complex bundles:
$$\rho:H|_{X\backslash D}\otimes_{\sO_{X\backslash D}}\sC^\infty_{X\backslash D}=V|_{X\backslash D}\otimes_{\sO_{X\backslash D}}\sC^\infty_{X\backslash D}.$$
In particular, the $C^\infty$ complex bundle associated with $H|_{X\backslash D}$ has two complex structures: $\dbar$, the complex structure of the Higgs bundle $H|_{X\backslash D}$, and $\nabla^{0,1}$, the $(0,1)$-part of $\nabla$ in the flat bundle $V|_{X\backslash D}$.

In this setting, Theorem \ref{thm_main} implies the following result.
\begin{thm}\label{thm_Higgs}
	Let $K$ be a locally free subsheaf of $H|_{X\backslash D}$ satisfying the following conditions:
	\begin{itemize}
		\item Holomorphicity: $\nabla^{0,1}(K) = 0$, meaning that $K$ is holomorphic with respect to both the complex structures $\dbar$ and $\nabla^{0,1}$.
		\item Weak transversality\footnote{This condition is referred to as weak transversality due to Griffiths's transversality when $H$ arises from a variation of Hodge structure with $\{F^p\}_{p\in\bZ}$ as the Hodge filtration and $K = F^p$ for some $p$.}: $(\nabla - \theta)(K) \subset K \otimes \sA^{1,0}_{X\backslash D}$. 
	\end{itemize}
Let $L$ be a line bundle on $X$ such that $L\simeq_{\mathbb{R}}B+N$, where $B$ is a semi-positive $\mathbb{R}$-divisor and $N$ is an $\mathbb{R}$-divisor on $X$ supported on $D$. Let $F$ be a Nakano semi-positive vector bundle on $X$. Let $j:X\backslash D\to X$ be the immersion, and let $f:X\to Y$ be a surjective projective morphism to a complex manifold $Y$.

Then, the sheaf $f_\ast\left(\omega_{X/Y}\otimes (P_{D-N,(2)}(H)\cap j_\ast K)\otimes F\otimes L\right)$ has a singular Hermitian metric which is Griffiths semi-positive and satisfies the minimal extension property.
\end{thm}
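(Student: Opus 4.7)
The plan is to apply Theorem \ref{thm_main} to a suitable Hermitian vector bundle $(E, h_E)$ on $X^o := X \setminus D$. By the non-abelian Hodge theory of Simpson and Mochizuki, the $\mu_A$-polystable parabolic Higgs bundle $(H, \{{_E}H\}, \theta)$ with vanishing parabolic Chern classes carries a tame pluriharmonic metric $h_H$ on $H|_{X^o}$, adapted to the parabolic structure, whose Chern connection together with $\theta$ and $\theta^\ast_{h_H}$ recovers the flat connection $\nabla$ on the associated parabolic flat bundle $V$. I would decompose $L \simeq_{\bR} B + N$ and equip $\sO_X(L)$ with the singular metric $h_L = h_B \otimes h_N$, where $h_B$ is a smooth metric of semi-positive curvature (using that $B$ is semi-positive) and $h_N$ is the canonical singular metric of the $\bR$-divisor $N$, then set
\[
E := (K \otimes F \otimes L)|_{X^o}, \qquad h_E := h_H|_K \otimes h_F \otimes h_L|_{X^o}.
\]

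The next step is to verify the hypotheses of Theorem \ref{thm_main} for $(E, h_E)$ and to identify the resulting direct image. For singular Nakano semi-positivity on $X^o$, I would combine the two holomorphicity conditions on $K$ with weak transversality and the pluri-harmonic equations for $h_H$ to obtain a Griffiths/Nakano-type curvature formula for $(K, h_H|_K)$ that directly generalizes Griffiths's classical computation for the top Hodge piece of a variation of Hodge structure; tensoring with the Nakano semi-positive $(F, h_F)$ and the singularly semi-positive $h_L$ will preserve singular Nakano semi-positivity. Tameness of $(E, h_E)$ on $X$ should follow from Mochizuki's polynomial growth estimates for $h_H$ relative to the parabolic filtration, together with the log-polynomial behaviour of $h_N$ near $D$. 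The central step will be the identification
\[
S_X(E, h_E) \;\cong\; \omega_X \otimes \bigl(P_{D-N,(2)}(H) \cap j_\ast K\bigr) \otimes F \otimes L,
\]
which I plan to prove by working in local coordinates $(z_1, \dots, z_n)$ centred at a point of $D$ with $D = \{z_1 \cdots z_r = 0\}$ and matching the local $L^2$-integrability, against $h_E$, of an $E$-valued $(n,0)$-form with membership in the right-hand side. The key input is Mochizuki's mutual boundedness of $h_H$ with the model metric built from the nilpotent weight filtrations $W_{-2,i}$, the $N$-shift furnished by the factor $h_N$, and the restriction to $K$ preserved through $\dbar_H$-holomorphicity. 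Once this identification is in hand, applying Theorem \ref{thm_main} to $(E, h_E)$ and tensoring with $f^\ast \omega_Y^{-1}$ yields the statement.

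The hard part will be the $L^2$-identification of $S_X(E, h_E)$. It requires precise control of the harmonic metric near each stratum of $D$; the locally abelian hypothesis on the parabolic Higgs bundle is essential, as it guarantees that the Mochizuki model metric splits compatibly across intersections of components of $D$, so that the conditions $W_{-2,i}$ in the definition of $P_{D-N,(2)}(H)$ decouple and can be matched against the local $L^2$ condition. By comparison, the Griffiths-type curvature computation for $(K, h_H|_K)$ should be a direct adaptation of known Nakano-type formulas for harmonic bundles, and tameness is largely built into the parabolic structure of $H$.
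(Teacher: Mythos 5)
Your proposal follows essentially the same route as the paper: equip $K\otimes F\otimes L$ over $X\setminus D$ with the tame harmonic metric restricted to $K$ tensored with $h_F$ and a singular metric on $L$ built from the decomposition $L\simeq_{\bR}B+N$, check tameness and singular Nakano semi-positivity, identify $S_X(K\otimes F\otimes L, hh_Fh_L)$ with $\omega_X\otimes(P_{D-N,(2)}(H)\cap j_\ast K)\otimes F\otimes L$, and apply Theorem \ref{thm_main}. The only difference is that the curvature/tameness verification and the $L^2$-identification you propose to prove by hand are simply quoted by the paper from \cite{SZtwisted} (Lemmas 3.8, 3.10 and Corollary 3.13).
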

In \cite{SZtwisted}, the authors demonstrate that $\omega_X\otimes (P_{D-N,(2)}(H)\cap j_\ast K)\otimes F\otimes L$ satisfies Koll\'ar's package. In particular, they establish that $f_\ast\left(\omega_{X/Y}\otimes (P_{D-N,(2)}(H)\cap j_\ast K)\otimes F\otimes L\right)$ is weakly positive in the sense of Viehweg. 
\begin{rmk}
	Let $(H,\{{_E}H\}_{E\in{\rm Div}_D(X)},\theta)$ be the parabolic Higgs bundle associated with a variation of Hodge structure and let $K=S(\bV)$. Let $\varphi_N$ be the weight quasi-psh function associated with the divisor $N$. Then $P{_{D-N,(2)}}(H)\cap j_\ast K$ is coincide with the multiplier $S$-sheaf $S(IC_X(\bV),\varphi_N)$.
\end{rmk}
\subsection{Example: parabolic bundle}
Let $X$ be a smooth projective variety and $D\subset X$ be a simple normal crossing divisor on $X$. Let $(H,\{{_E}H\}_{E\in{\rm Div}_D(X)})$ be a locally abelian parabolic bundle on $(X,D)$ with vanishing parabolic Chern classes. This bundle is also polystable with respect to an ample line bundle $A$ on $X$.
In this case, we can consider $(H,\{{_E}H\}_{E\in{\rm Div}_D(X)})$ as a parabolic Higgs bundle with a vanishing Higgs field. Consequently, $P_{E,(2)}(H)={_{<E}}H$. By selecting $K=H|_{X\backslash D}$ in Theorem \ref{thm_main}, the conditions of holomorphicity and weak transversality are satisfied for $K$. Therefore, Theorem \ref{thm_Higgs} implies the following.
\begin{thm}
	Let $X$ be a smooth, projective variety and $D$ a simple normal crossing divisor on $X$. Let $(H,\{{_E}H\}_{E\in{\rm Div}_D(X)})$ be a locally abelian parabolic bundle on $(X,D)$ with vanishing parabolic Chern classes, which is polystable with respect to an ample line bundle $A$ on $X$. Let $L$ be a line bundle on $X$ such that $L\simeq_{\bR}B+N$, where $B$ is a semi-positive $\bR$-divisor and $N$ is an $\bR$-divisor on $X$ supported on $D$. Let $F$ be an arbitrary Nakano semi-positive vector bundle on $X$. 
	
	Let $f:X\to Y$ be a surjective projective morphism to a complex manifold $Y$.
	Then the direct image sheaf $f_\ast\left(\omega_{X/Y}\otimes {_{<D-N}}H\otimes F\otimes L\right)$ has a singular Hermitian metric which is Griffiths semi-positive and satisfies the minimal extension property.
\end{thm}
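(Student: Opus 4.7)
The plan is to deduce this theorem as an immediate corollary of Theorem \ref{thm_Higgs} by equipping the parabolic bundle with the zero Higgs field. The paragraph preceding the statement already sketches this reduction in words; I would make the identifications formal and then invoke Theorem \ref{thm_Higgs} directly.

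First, I would regard $(H,\{{_E}H\}_{E\in{\rm Div}_D(X)})$ as a parabolic Higgs bundle $(H,\{{_E}H\}_{E\in{\rm Div}_D(X)},\theta)$ with $\theta=0$. The hypotheses of vanishing parabolic Chern classes and $\mu_A$-polystability transfer without change, and the zero Higgs field is trivially regular along $D$. Since every residue ${\rm Res}_{D_i}(\theta)$ vanishes, the nilpotent endomorphism underlying the monodromy weight filtration is zero, so by the standard convention $W_{-2,i}({_E}H)=0$ for each $i$. Substituting into the defining formula yields
\[
P_{E,(2)}(H)={_{<E}}H+\bigcap_{i=1}^r W_{-2,i}({_E}H)={_{<E}}H,
\]
recovering the identification already asserted in the excerpt. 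I would then take $K:=H|_{X\backslash D}$. Because $K$ is the entire restricted bundle, the holomorphicity condition $\nabla^{0,1}(K)=0$ and the weak transversality condition $(\nabla-\theta)(K)\subset K\otimes \sA^{1,0}_{X\backslash D}$ of Theorem \ref{thm_Higgs} are vacuous, and since ${_{<D-N}}H$ already lies inside $j_\ast(H|_{X\backslash D})=j_\ast K$, the intersection simplifies to $P_{D-N,(2)}(H)\cap j_\ast K={_{<D-N}}H$.

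With these identifications in place, I would apply Theorem \ref{thm_Higgs} to the parabolic Higgs bundle above, the subsheaf $K=H|_{X\backslash D}$, the line bundle $L\simeq_{\mathbb{R}}B+N$, the Nakano semi-positive bundle $F$, and the projective morphism $f:X\to Y$. Its conclusion specializes exactly to the desired assertion that $f_\ast(\omega_{X/Y}\otimes {_{<D-N}}H\otimes F\otimes L)$ admits a canonical Griffiths semi-positive singular Hermitian metric satisfying the minimal extension property. I do not foresee any substantive obstacle; the argument is essentially pure bookkeeping, with the only point requiring a moment's care being the convention for the weight filtration of the zero nilpotent operator when verifying the collapse of $P_{E,(2)}(H)$ onto ${_{<E}}H$.
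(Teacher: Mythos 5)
Your proposal is correct and follows essentially the same route as the paper: viewing the parabolic bundle as a parabolic Higgs bundle with $\theta=0$, noting that the weight filtration of the zero residue gives $P_{E,(2)}(H)={_{<E}}H$, taking $K=H|_{X\backslash D}$ (for which holomorphicity and weak transversality hold automatically), and invoking Theorem \ref{thm_Higgs}.
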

This article is structured as follows. In Section 2, we provide a review of basic concepts such as a singular metric on a torsion-free sheaf, Caltaldo's notion of Nakano semi-positivity, Hacon-Popa-Schnell's notion of minimal extension property, and Guan-Mi-Yuan's optimal Ohsawa-Takegoshi extension theorem. Section 3 introduces and examines $S_X(E,h)$, while also explaining its connection to significant transcendental and Hodge theoretic objects. The main result is demonstrated in Section 4.
\section{preliminary}
\subsection{Positivity of singular Hermitian metrics on a torsion free coherent sheaf}
	In this subsection, we review the notion of Nakano/Griffiths positivity for singular Hermitian metrics.
	
	Throughout this subsection, let $X$ be a complex manifold of dimension $n$ and $E$ be
	a vector bundle of rank $r$ on $X$. 
\begin{defn}[Nakano positivity and Griffiths positivity]
	A $C^2$ smooth Hermitian metric $h$ on $E$ defines the Chern curvature and associated Hermitian form:
	\begin{align}
		\sqrt{-1}\Theta_h\in C^0(X,\Lambda^{1,1}\otimes{\rm End}(E))\quad\textrm{and}\quad 	\sqrt{-1}\tilde{\Theta}_h\in C^0(X,{\rm Herm}(T_X\otimes E)).
	\end{align}	
		The metric $h$ is said to be
		\begin{itemize}
			\item \emph{Nakano semi-positive}, denoted as $\Theta_h(E)\geq_{\rm Nak}0$, if
			for every $u\in T_X\otimes E$, it holds that
			$\sqrt{-1}\tilde{\Theta}_h(u,u)\geq 0$.
			\item \emph{Griffiths semi-positive}, 
			 if for all $\xi\in TX$ and $s\in E$, it holds that 
			$\sqrt{-1}\tilde{\Theta}_h(\xi\otimes s,\xi\otimes s)\geq 0$.
		\end{itemize} 
\end{defn}

We review the singular version of the Griffiths positivity and  Nakano positivity of  a singular hermitian metric on a vector bundle.
First, the concept of a singular Hermitian metric, as introduced by Berndtsson-Paun \cite{BP2008}, Paun-Takayama \cite{PT2018}, and Hacon-Popa-Schnell \cite{HPS2018}, is defined as follows.
	\begin{defn}\label{defn_shmonvb}
		A \emph{singular Hermitian metric} on a vector bundle $E$ is a function $h$ that associates
		to every point $x\in X$ a singular hermitian inner product $|-|_{h,x} : E_x \rightarrow  [0, +\infty]$ on
		the complex vector space $E_x$, subject to the following two conditions:
		\begin{itemize}
			\item $h$ is finite and positive definite almost everywhere, meaning that for all $x$
			outside a set of measure zero, $|-|_{h,x}$ is a singular hermitian inner product on $E_x$.
			\item $h$ is measurable, meaning that the function
			$$|s|_h : U \rightarrow  [0, +\infty], x \mapsto|s(x)|_{h,x}$$
			is measurable whenever $U\subset X$ is open and $s\in H^0(U,E)$.
		\end{itemize}
	\end{defn}
\begin{defn}
	Let $\sF$ be a torsion free coherent sheaf on $X$. Denote by $X(\sF)\subset X$ the maximal open subset where $\sF$ is locally free and let $E:=\sF|_{X(\sF)}$.
	A \emph{singular Hermitian metric on  $\sF$} is a singular Hermitian metric $h$ on the holomorphic vector bundle $E$. 
\end{defn}	
	A singular Hermitian metric $h$ on $E$ induces a dual singular Hermitian metric $h^\ast$ on $E^\ast$.
	\begin{defn}
	A singular hermitian metric $h$ on a vector bundle $E$ is called \emph{Griffiths semi-positive} if $\log|u|^2_{h^\ast}$ is plurisubharmonic for any local holomorphic section $u$ of $E^\ast$. A singular hermitian metric $h$ on a torsion free coherent sheaf $\sF$ is called \emph{Griffiths semi-positive} if $(\sF|_{X(\sF)},h)$ is Griffiths semi-positive.
	\end{defn}
	When $h$ is a smooth Hermitian metric on $E$, the above definition coincides with the classical Griffiths positivity.

The following definition, which can be regarded as the singular version of Nakano positivity, was introduced by Cataldo \cite{CataldoAndrea1998} and Guan-Mi-Yuan \cite{GMY2023}.
\begin{defn}\label{defn_singularNakanopositive}
	 Let $\omega$ be a Hermitian form on $X$ and $\theta$ be a continuous real $(1,1)$-form on $X$. A singular Hermitian metric $h$ on $E$ is called \emph{$\theta$-Nakano semi-positive in the sense of approximations}, denoted by 
	 	$$\Theta_h(E) \geq_{\textrm{Nak}}^s \theta\otimes Id_E$$ if
	 	there is a collection of data $(\Sigma,X_j,h_{j,s})$ satisfying that 
	\begin{enumerate}
		\item $\Sigma\subset X$ is a closed
		set of measure zero;
		\item $\{X_j\}_{j=1}^{+\infty}$ is an open cover of $X$ made of  sequence of relatively compact subsets of $X$ such that $X_1\Subset X_2\Subset\cdots\Subset X_j\Subset X_{j+1}\Subset \cdots$;
		\item For each $X_j$, there exists a sequence of $C^2$ Hermitian metrics $\{h_{j,s}\}_{s=1}^{+\infty}$ on $X_j$ 
		such that
		\begin{align}
			\lim_{s\rightarrow +\infty} h_{j,s} = h \quad\textrm{point-wisely on}\quad X_j\setminus \Sigma
		\end{align}
	and for each $x\in X_j$ and $e\in E_x$ we have
	\begin{align}\label{align_metricincrease}
		|e|_{h_{j,s}}\nearrow |e|_{h_{j,s+1}} \quad\textrm{as}\quad s\nearrow \mathbb{N};
	\end{align}
\item For each $X_j$, there exists a sequence of continuous functions $\lambda_{j,s}$ on $X_j$ and a
continuous function $\lambda_j$ on $X_j$ subject to the following requirements:
\begin{itemize}
	\item $\Theta_{h_{j,s}}(E)\geq_{\rm Nak}\theta-\lambda_{j,s}\omega\otimes {\rm Id}_E$ on $X_j$;
	\item $\lambda_{j,s}\rightarrow 0$ almost everywhere on $X_j$;
	\item $0\leq \lambda_{j,s}\leq \lambda_j$ on $X_j$ for any $s\in\mathbb{N}$.
	\end{itemize}
	\end{enumerate} 
	Especially, when $\theta = 0$, the singular Hermitian metric $h$ is called \emph{singular Nakano
	semi-positive}, denoted by \(\Theta_h(E) \geq_{\rm Nak}^{ s} 0\). 
\end{defn}

\subsection{Ohsawa-Takegoshi extension theorem and the minimal extension property}
Let $X$ be a complex manifold of dimension $n$ and let $X^o\subset X$ be a Zariski open subset. Let $(E,h)$ be a holomorphic vector bundle on $X^o$ with a singular Hermitian metric such that $\Theta_h(E)\geq_{\rm Nak}^s 0$ (Definition \ref{defn_singularNakanopositive}). 
Let $\alpha=\alpha'\otimes\omega$ be an $E$-valued $(n,0)$-form, where $\alpha'$ is a section of $E$ and $\omega$ is an $(n,0)$-form. We use the notation $$\{\alpha,\alpha\}_h:=c_n|\alpha'|^2_h\omega\wedge\overline{\omega},\quad c_n = 2^{-n}(-1)^{\frac{n^2}{2}}.$$ The $L^2$-norm of $\alpha\in H^0(X^o,\omega_{X^o}\otimes E)$ is defined as
$$\|\alpha\|_h^2 = \int_{X^o} \{\alpha,\alpha\} \in [0,+\infty].$$
We follow \cite{HPS2018} to use the scaling $c_n$ in the $L^2$-norm.

Suppose $f:X\rightarrow B$ is a projective map to the open unit ball $B\subset \bC^r$, where $0\in B$ is a regular value of $f$. Then the central fiber $X_0=f^{-1}(0)$ is a projective manifold of dimension $n-r$. We assume that $X_0\cap X^o\neq\emptyset$ and denote by $(E_0,h_0)$ the restriction of $(E,h)$ to $X_0\cap X^o$. The $L^2$ extension theorem for vector bundles equipped with a singular Hermitian metric, originally developed by Ohsawa-Takegoshi \cite{OT1987}, has been further elaborated by Guan-Zhou \cite{GZ2015} and Guan-Mi-Yuan \cite{GMY2023}. This theorem plays a crucial role in the proof of the main theorem of this article. For more results on this direction, we refer the readers to \cites{Blocki2013,CPB2024,BP2008,Berndtsson1996,Demailly2000,DHP2013,OT1988} and the references therein.
\begin{thm}\cite{GMY2023}\label{thm_OTextension}
	Notations as above. Suppose that $\Theta_h(E)\geq_{\rm Nak}^s 0$ and $h_0\not\equiv +\infty$. Then for every $\alpha\in H^0(X_0\cap X^o,\omega_{X_0\cap X^o}\otimes E|_{X_0\cap X^o})$ with $\|\alpha\|_{h_0}^2<\infty$, there exists $\beta\in H^0(X^o,\omega_{X^o}\otimes E)$ with 
	\begin{align}
		\beta|_{X_0}=\alpha\wedge df\quad \textrm{and}\quad \|\beta\|_h^2\leq \mu(B)\cdot \|\alpha\|_{h_0}^2.
	\end{align}
\end{thm}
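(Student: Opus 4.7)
The plan is to reduce the theorem to the case of $C^2$-smooth Nakano semi-positive metrics through the approximation scheme in Definition \ref{defn_singularNakanopositive} and then invoke the optimal Ohsawa-Takegoshi extension theorem for smooth metrics (Berndtsson-Lempert, Blocki, Guan-Zhou). The underlying principle is that the uniform $L^2$-bound obtained in the smooth case survives the passage to the singular limit, because the approximating norms $|e|_{h_{j,s}}$ increase monotonically to $|e|_h$ pointwise off a measure-zero set, so Fatou-type arguments preserve the sharp constant $\mu(B)$.

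Fix an exhaustion $X_1 \Subset X_2 \Subset \cdots$ of $X$ and, on each $X_j$, take the sequence $\{h_{j,s}\}_s$ furnished by Definition \ref{defn_singularNakanopositive}. For every fixed pair $(j,s)$, the metric $h_{j,s}$ is $C^2$ with $\Theta_{h_{j,s}}(E) \geq_{\rm Nak} -\lambda_{j,s}\,\omega\otimes{\rm Id}_E$. Applying the smooth sharp $L^2$-extension theorem to $f:X_j \to B$ then yields a holomorphic extension $\beta_{j,s} \in H^0(X_j \cap X^o, \omega_{X_j \cap X^o} \otimes E)$ with $\beta_{j,s}|_{X_0 \cap X_j} = \alpha \wedge df$ and
\begin{align*}
\|\beta_{j,s}\|_{h_{j,s}}^2 \leq (1+\varepsilon_{j,s})\,\mu(B)\,\|\alpha\|_{h_0}^2,
\end{align*}
where $\varepsilon_{j,s}\to 0$ as $s\to\infty$; the error $\varepsilon_{j,s}$ absorbs the slack $\lambda_{j,s}\omega$ in the Nakano inequality. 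The smooth step itself follows the classical recipe: locally extend $\alpha\wedge df$ by a smooth form cut off near $X_0$, solve $\dbar u = \dbar(\text{cutoff})$ with a twisted weight $\psi$ having logarithmic singularity along $X_0$ via H\"ormander's estimate on the twisted Bochner-Kodaira formula (\cite{Demailly1982, CataldoAndrea1998}), and optimize $\psi$ following Berndtsson-Lempert to recover the sharp constant $\mu(B)$.

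Next, I would pass to the limit $s\to\infty$ with $j$ fixed. Since $|e|_{h_{j,s}} \nearrow |e|_h$ and $h_{j,1}$ is a fixed smooth metric controlled from above by $h_{j,s}$, the uniform bound on $\|\beta_{j,s}\|_{h_{j,s}}$ furnishes a uniform bound on $\|\beta_{j,s}\|_{h_{j,1}}$; a standard normal-families argument then extracts a locally uniform holomorphic limit $\beta_j$ on $X_j \cap X^o$ still restricting to $\alpha\wedge df$ on $X_0 \cap X_j$. Monotone convergence applied to $|e|_{h_{j,s}} \nearrow |e|_h$ gives $\|\beta_j\|_h^2 \leq \mu(B)\,\|\alpha\|_{h_0}^2$. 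A diagonal argument across the exhaustion $\{X_j\}$ then yields the global extension $\beta \in H^0(X^o, \omega_{X^o}\otimes E)$ satisfying the required bound.

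The principal obstacle I anticipate is the quantitative control of the error $\varepsilon_{j,s}$ in the smooth step: the slack $\lambda_{j,s}$ in the almost-Nakano positivity of $h_{j,s}$ converges to zero only almost everywhere, so ensuring that the sharp constant $\mu(B)$ is retained in the limit requires dominated convergence against the majorant $\lambda_j$, together with a careful perturbation of the weight $\psi$ in the Bochner-Kodaira estimate that does not degrade the Berndtsson-Lempert optimization. A secondary subtlety is the measure-zero locus $\Sigma$ where $h$ may blow up, which requires verifying that the weak limit $\beta$ remains a genuine holomorphic section of $\omega_{X^o}\otimes E$ across $\Sigma\cap X^o$; this follows from the uniform $L^2$ bound and Riemann extension.
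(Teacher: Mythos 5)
First, note that the paper does not prove Theorem \ref{thm_OTextension} at all: it is quoted as an external result of Guan--Mi--Yuan \cite{GMY2023}, so there is no internal proof to compare your argument against; your attempt has to stand on its own as a proof of the cited theorem.

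On its own terms, your sketch has the right skeleton (approximate $h$ by the $C^2$ metrics $h_{j,s}$ of Definition \ref{defn_singularNakanopositive}, extend for each approximant, then pass to the limit by normal families, Fatou and monotone convergence -- these limit steps are fine, since $h_{j,1}\leq h_{j,s}\nearrow h$ and uniform convergence on compacts preserves the restriction condition $\beta|_{X_0}=\alpha\wedge df$), but the central quantitative step is a genuine gap. You claim that applying ``the smooth sharp $L^2$-extension theorem'' to $h_{j,s}$ yields an extension with constant $(1+\varepsilon_{j,s})\mu(B)$, $\varepsilon_{j,s}\to 0$, with the error ``absorbing the slack $\lambda_{j,s}\omega$.'' The smooth optimal theorem requires genuine (twisted) Nakano semi-positivity, whereas $\Theta_{h_{j,s}}(E)\geq_{\rm Nak}-\lambda_{j,s}\,\omega\otimes{\rm Id}_E$ with $\lambda_{j,s}\to 0$ only almost everywhere, dominated by a fixed continuous $\lambda_j$; there is no uniform bound $\sup\lambda_{j,s}\to 0$, so no black-box invocation produces a constant tending to $\mu(B)$, and you yourself flag this as unresolved. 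Closing it is precisely the content of \cite{GMY2023}: one must run the twisted Bochner--Kodaira/ODE machinery of Guan--Zhou directly for the family $h_{j,s}$, carrying the error term $\lambda_{j,s}|u|^2$ inside the a priori estimate, where the a.e.\ convergence plus the dominant $\lambda_j$ allow dominated convergence at the level of the integrals -- not a perturbation of the constant in an already-proved smooth theorem. A secondary omission: the extension must be produced on $X^o$, a Zariski-open subset of $X$, so the $L^2$ method needs a complete K\"ahler metric there (the paper invokes \cite[Lemma 2.14]{SZ2022} for exactly this point elsewhere); your sketch never addresses why the $\bar\partial$-estimates apply on $X^o$ rather than on a weakly pseudoconvex manifold.
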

The minimal extension property for singular Hermitian metrics, which is closely related to the Ohsawa-Takegoshi extension theorem, was introduced by Hacon, Popa, and Schnell \cite{HPS2018}. This property allows for the extension of sections across a bad locus while maintaining control over the norm of the section.

	Let us still denote by $B\subset \bC^n$ the open unit ball.
	\begin{defn}[minimal extension property]
		A singular Hermitian metric $h$ on a torsion-free coherent sheaf $\sF$ is said to have the \emph{minimal
			extension property} if there exists a nowhere dense closed analytic subset $Z$ 
		with the following two properties:
		\begin{itemize}
			\item $\sF$ is locally free on $X\setminus Z$.
			\item For every embedding : $\iota:B\rightarrow X$ with $x =\iota(0)\in X\setminus Z$, and every $v\in  E_x$
			with $|v_{h,x}| = 1$, there is a holomorphic section $s\in H^0(B,\sF)$ such that
			$s(0) = v$ and $$\frac{1}{\mu(B)}\int_B|s|_h^2d\mu\leq 1,$$
			where $(E,h)$ denotes the restriction to the open subset $X(\sF)$.
		\end{itemize}
	\end{defn}
    According to \cite{DNWZ2023}, the minimal extension property of a $C^2$ metric is equivalent to the Nakano semi-positivity of its curvature form.
\section{$S_X(E,h)$ and its basic properties}\label{subsection_SX}
Now, let's turn our attention to the main object of this paper, $S_X(E,h)$. This concept builds upon the same idea introduced in \cite{SZ2022}. The main difference here is that we allow for the metric $h$ to be singular.

Let $X$ be a complex space of dimension $n$ and $X^o\subset X_{\rm reg}$ a dense Zariski open subset of the regular locus $X_{\rm reg}$. Let $(E,h)$ be a vector bundle on $X^o$ with a singular Hermitian metric.
	\begin{defn}
		$S_X(E,h)$ is a sheaf defined as follows: for an open subset $U\subset X$, the space $S_X(E,h)(U)$ consists of holomorphic $E$-valued $(n,0)$-forms $\alpha$ on $U\cap X^o$ such that $\{\alpha,\alpha\}$ is locally integrable near every point of $U$.
		
		Let $f:X\to Y$ be a holomorphic morphism to a complex manifold $Y$. We define $S_{X/Y}(E,h)$ as $S_X(E,h)\otimes f^\ast \omega_Y^{-1}$.
	\end{defn}
If $X=X^o$ (in particular, $X$ is smooth) and $E$ is a holomorphic line bundle, then $S_X(E,h)\simeq \omega_X\otimes E\otimes \sI(h)$. The sheaf $S_X(E,h)$ is a torsion-free $\sO_X$-module with properties described in \cite{SZ2022}. The proofs of these properties are analogous and will be omitted here.
\begin{lem}
   If $U\subset X^o$ be a dense Zariski open subset, then $S_X(E,h)=S_X(E|_{U},h|_U)$.	
\end{lem}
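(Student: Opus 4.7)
The plan is to prove the equality by constructing mutually inverse maps on sections over an arbitrary open set $V \subset X$. Restriction of a holomorphic $E$-valued $(n,0)$-form from $V \cap X^o$ to $V \cap U$ provides an inclusion $S_X(E,h)(V) \hookrightarrow S_X(E|_U, h|_U)(V)$: restriction preserves holomorphicity, and the pointwise integrand $\{\alpha,\alpha\}_h$ is measured with the same metric $h$, so the local $L^2$-condition near every point of $V$ is automatic. Injectivity follows from the density of $V \cap U$ in $V \cap X^o$. The substance of the lemma is therefore the surjectivity of this map, i.e.\ showing that every section on $V \cap U$ satisfying the integrability condition extends to a holomorphic section on $V \cap X^o$.

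Since $U$ is Zariski open in the smooth complex manifold $X^o$, the set $Z := V \cap (X^o \setminus U)$ is a nowhere-dense analytic subset of the complex manifold $V \cap X^o$. Given $\alpha \in S_X(E|_U, h|_U)(V)$, the sought-for holomorphic extension to $V \cap X^o$ is unique by density, so its existence reduces to a local problem in a neighborhood of each point of $Z$. Near a point $p \in Z$, I would choose a small polydisk $W \subset V \cap X^o$ about $p$ on which $E$ trivializes; then $\alpha$ becomes a finite tuple of holomorphic $(n,0)$-forms on $W \setminus (Z \cap W)$, and the local integrability of $\{\alpha,\alpha\}_h$ furnishes an $L^2$-type bound across the thin analytic subset $Z \cap W$ of the polydisk.

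The hard part will be converting this $L^2$-bound, taken with respect to the possibly degenerate metric $h$, into an honest holomorphic extension across $Z \cap W$. The strategy is to invoke the classical Riemann/Hartogs extension theorem: along each codimension-one irreducible component of $Z \cap W$, the Laurent expansion of each coordinate component of $\alpha$ must have vanishing principal part because of the integrability, which forces holomorphic extendibility. On higher-codimension strata, Hartogs extension applies directly. The delicate step is precisely the codimension-one case, where one needs to argue that the $h$-integrability of $\alpha$ suffices to rule out a pole; this is the content of the analogous property proved in \cite{SZ2022}, and I would follow that approach. Once the local extensions are in place, the sheaf axioms together with the uniqueness of extension across the analytic set glue them to a holomorphic section on $V \cap X^o$; the $L^2$-condition is preserved because $Z$ has Lebesgue measure zero, producing the required element of $S_X(E,h)(V)$.
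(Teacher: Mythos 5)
You have the right skeleton---the restriction map gives the easy inclusion, and the substance is extending a section of $S_X(E|_U,h|_U)$ holomorphically across $Z=X^o\setminus U$---but the step you label the ``hard part'' is the entire content of the lemma, and as you have set it up it does not go through: local integrability of $\{\alpha,\alpha\}_h$ is a bound against the singular metric $h$, and if $h$ itself degenerates along $Z$ this bound says nothing about the coefficients of $\alpha$ in a holomorphic frame and cannot rule out poles. Concretely, take $X=X^o=\Delta\subset\bC$, $E=\sO_\Delta$, $|1|_h^2=|z|^{2N}$ with $N\geq1$, and $U=\Delta\setminus\{0\}$: the form $\alpha=z^{-1}dz$ satisfies $\{\alpha,\alpha\}_h\sim|z|^{2N-2}$, which is integrable near $0$, yet $\alpha$ has no holomorphic extension, so for a bare singular Hermitian metric the codimension-one Laurent argument (and with it the equality of the two sheaves) genuinely fails. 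The missing idea is a local comparison of $h$ from below by a nondegenerate metric at points of $X^o$, which is exactly what the standing hypotheses of the paper supply and what must be invoked: if $\Theta_h(E)\geq_{\rm Nak}^s0$ on $X^o$ (Definition \ref{defn_singularNakanopositive}), then locally $|\cdot|_h\geq|\cdot|_{h_{j,1}}$ almost everywhere for a $C^2$ metric $h_{j,1}$, since $h$ is an increasing pointwise limit of the approximating metrics off a null set; alternatively, Griffiths semi-positivity makes $\log|u|^2_{h^\ast}$ plurisubharmonic, hence $h^\ast$ locally bounded above, which bounds each coefficient via $|e_i^\ast(\alpha')|\leq|e_i^\ast|_{h^\ast}|\alpha'|_h$; tameness of $(E,h)$ at points of $X^o$ (Definition \ref{defn_tame_Hermitian_bundle}) gives the same comparison, because there the exceptional ideal is trivial.

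Once such a comparison is stated, the rest of your outline is standard and correct: on a polydisk the coefficients of $\alpha$ become genuinely $L^2$ for the Lebesgue measure, the principal parts along codimension-one components of $Z$ vanish (equivalently, quote the $L^2$ Riemann extension theorem across analytic sets), Hartogs handles codimension at least two, uniqueness glues the local extensions, and the integrability condition over all of $V$ is unaffected since $Z$ is a null set. Deferring the delicate step to \cite{SZ2022} mirrors what the paper itself does (it omits the proof as ``analogous''), but in that reference the metric in the corresponding statement is comparable to a smooth one on $X^o$, so the needed comparison is built in; a self-contained proof here must say explicitly which property of $h$ is being used, and without that your argument is incomplete precisely at its crux.
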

\begin{prop}[Functoriality Property]\label{prop_L2ext_birational}
	Let $\pi:X'\to X$ be a proper holomorphic map between complex spaces which is biholomorphic over $X^o$. Then $$\pi_\ast S_{X'}(\pi^\ast E,\pi^\ast h)=S_X(E,h).$$
\end{prop}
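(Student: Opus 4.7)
My plan is to unwind the definitions and exploit the biholomorphism $\pi|_{\pi^{-1}(X^o)} : \pi^{-1}(X^o) \xrightarrow{\sim} X^o$, then show that the local $L^2$ conditions on both sides match. Fix an open $U \subset X$. A section of $S_X(E,h)$ on $U$ is a holomorphic $E$-valued $(n,0)$-form $\alpha$ on $U \cap X^o$, and a section of $S_{X'}(\pi^\ast E, \pi^\ast h)$ on $\pi^{-1}(U)$---with the natural choice $(X')^o := \pi^{-1}(X^o)$, which is a dense Zariski open subset of $X'_{\rm reg}$ by irreducibility of $X'$ and the biholomorphy assumption---is a holomorphic $\pi^\ast E$-valued $(n,0)$-form $\beta$ on $\pi^{-1}(U) \cap \pi^{-1}(X^o)$. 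Pullback gives a bijection $\alpha \mapsto \pi^\ast \alpha$ between these two spaces of holomorphic forms, so it remains only to match the integrability conditions.

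The key input is the bimeromorphic invariance of top-form integration. Since $\pi$ is proper and biholomorphic over $X^o$, both $X' \setminus \pi^{-1}(X^o)$ and $X \setminus X^o$ (together with the singular loci) are proper analytic subsets, hence of Lebesgue measure zero on the respective regular loci. The change-of-variables formula on the loci where $\pi$ is biholomorphic then gives, for any open $V \subset U$,
\[
\int_V \{\alpha, \alpha\}_h \;=\; \int_{\pi^{-1}(V)} \{\pi^\ast\alpha,\, \pi^\ast\alpha\}_{\pi^\ast h}.
\]
The inclusion $S_X(E,h) \subset \pi_\ast S_{X'}(\pi^\ast E, \pi^\ast h)$ follows immediately: if $\{\alpha,\alpha\}_h$ is locally integrable near each $x \in U$, picking a neighborhood $V \ni x$ of finite integral and pulling back produces a neighborhood of every $x' \in \pi^{-1}(x)$ on which the pullback is integrable.

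The reverse inclusion is where I expect properness of $\pi$ to play the essential role, and constitutes the main (mild) obstacle. Given $x \in U$ and assuming $\{\pi^\ast \alpha, \pi^\ast\alpha\}_{\pi^\ast h}$ is locally $L^1$ near every point of the compact fiber $\pi^{-1}(x)$, I would cover $\pi^{-1}(x)$ by finitely many open sets $W_1, \ldots, W_k \subset \pi^{-1}(U)$ on each of which the pullback is integrable; by properness of $\pi$ there is a neighborhood $V$ of $x$ in $U$ with $\pi^{-1}(V) \subset W_1 \cup \cdots \cup W_k$, and the displayed identity then bounds $\int_V \{\alpha,\alpha\}_h$ by $\sum_i \int_{W_i} \{\pi^\ast \alpha, \pi^\ast\alpha\}_{\pi^\ast h} < \infty$. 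The only genuine technical point is to justify the change-of-variables identity across the singular and exceptional loci, but this is standard since those loci are proper analytic subsets and therefore removable for integrals of non-negative top-degree forms.
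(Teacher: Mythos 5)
Your argument is correct and is exactly the standard unwinding that the paper intends (it omits the proof of this proposition, deferring to the analogous statements in \cite{SZ2022}): transport sections via the biholomorphism over $X^o$, match the local integrability conditions by change of variables, use continuity of $\pi$ for the easy inclusion, and use properness together with compactness of the fibers $\pi^{-1}(x)$ for the reverse one. The only remark worth adding is that your final ``technical point'' is even more innocuous than you suggest: since $\{\alpha,\alpha\}_h$ is by definition integrated only over $V\cap X^o$, and $\pi$ is biholomorphic precisely there, the identity $\int_{V\cap X^o}\{\alpha,\alpha\}_h=\int_{\pi^{-1}(V)\cap\pi^{-1}(X^o)}\{\pi^\ast\alpha,\pi^\ast\alpha\}_{\pi^\ast h}$ needs no removability argument across the singular or exceptional loci.
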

\begin{lem}\label{lem_L2_tensor}
	Let $(F,h_F)$ be a Hermitian vector bundle on $X$ where $h_F$ is a smooth metric. Then 
	$$S_X(E\otimes F|_{X^o},hh_F)\simeq S_X(E,h)\otimes F.$$
\end{lem}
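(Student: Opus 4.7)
The plan is to reduce the statement to a local computation using a trivialization of $F$. Since $F$ is a holomorphic vector bundle on all of $X$ (not just on $X^o$) and $h_F$ is smooth, both sides of the claimed isomorphism are locally given by a direct sum of $k = \operatorname{rank}(F)$ copies of $S_X(E,h)$, and the only real content is verifying that the $L^2$-integrability condition is invariant under tensoring with $(F,h_F)$.

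First I would fix a point $x \in X$ and choose an open neighborhood $U \subset X$ of $x$ on which $F|_U$ admits a holomorphic frame $f_1,\dots,f_k$. With respect to this frame, any holomorphic $(E\otimes F|_{X^o})$-valued $(n,0)$-form $\alpha$ on $U \cap X^o$ has a unique expression
$$\alpha \;=\; \sum_{i=1}^{k}\alpha_i\otimes f_i,$$
where the $\alpha_i$ are holomorphic $E$-valued $(n,0)$-forms on $U \cap X^o$. This sets up a sheaf map $S_X(E,h)^{\oplus k}|_U \to S_X(E\otimes F|_{X^o},hh_F)|_U$ in one direction and the inverse in the other, provided we can match the integrability conditions.

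Next I would establish the norm comparison. After possibly shrinking $U$ (keeping $x \in U$), the Gram matrix $\bigl(h_F(f_i,f_j)\bigr)_{i,j}$ of $h_F$ with respect to the chosen frame is continuous and positive definite on $U$, hence bounded above and below by positive constants. This yields a two-sided estimate
$$C^{-1}\sum_{i=1}^{k}\{\alpha_i,\alpha_i\}_{h} \;\leq\; \{\alpha,\alpha\}_{hh_F} \;\leq\; C\sum_{i=1}^{k}\{\alpha_i,\alpha_i\}_{h}$$
pointwise on $U\cap X^o$, for some constant $C = C(U) > 0$. Consequently, $\{\alpha,\alpha\}_{hh_F}$ is locally integrable near $x$ if and only if each $\{\alpha_i,\alpha_i\}_{h}$ is locally integrable near $x$. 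This is the heart of the argument and depends crucially on $h_F$ being smooth (continuity and local boundedness would already suffice).

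Finally, the local isomorphism
$$S_X(E\otimes F|_{X^o},hh_F)|_U \;\xrightarrow{\sim}\; S_X(E,h)^{\oplus k}|_U \;\simeq\; \bigl(S_X(E,h)\otimes F\bigr)|_U$$
is manifestly independent of the choice of frame, since a holomorphic change of frame acts by an invertible holomorphic matrix that preserves both the integrability condition (by the two-sided bound above) and the tensor product structure. Hence these local isomorphisms glue to the claimed global isomorphism. There is no serious obstacle here; the only point requiring attention is the norm comparison, which I expect to be the step the reader should actually verify in detail.
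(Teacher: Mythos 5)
Your argument is correct: the two-sided pointwise bound follows because the Gram matrix $\bigl(h_F(f_i,f_j)\bigr)$ is pinched between positive multiples of the identity on a small neighborhood, and then $\{\alpha,\alpha\}_{hh_F}$ is comparable to $\sum_i\{\alpha_i,\alpha_i\}_h$, which is exactly what is needed to match the integrability conditions and glue the frame-independent local isomorphisms. The paper itself omits the proof (deferring to the analogous statements in \cite{SZ2022}), and your local-frame comparison is precisely the standard argument intended there, so there is nothing to flag.
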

We generalize the tameness condition introduced in \cite{SZ2022} to include singular Hermitian metrics. The concept of "tameness" is inspired by the theory of degeneration of Hodge structures \cites{Schmid1973,Cattani_Kaplan_Schmid1986} and the theory of tame harmonic bundles \cite{Simpson1988,Simpson1990,Mochizuki20072,Mochizuki20071}.
\begin{defn}\label{defn_tame_Hermitian_bundle}
	Let $X$ be a complex space and $X^o\subset X_{\rm reg}$ a dense Zariski open subset. A  vector bundle $(E,h)$ on $X^o$ with a singular Hermitian metric is called \emph{tame} on $X$ if, for every point $x\in X$, there is an open neighborhood $U$ of $x$, a proper bimeromorphic morphism $\pi:\widetilde{U}\to U$ which is biholomorphic over $U\cap X^o$, and a vector bundle $Q$ endowed with a smooth metric $h_Q$ on $\widetilde{U}$ such that the following conditions hold.
	\begin{enumerate}
		\item $\pi^\ast E|_{\pi^{-1}(X^o\cap U)}\subset Q|_{\pi^{-1}(X^o\cap U)}$ as a subsheaf.
		\item There is a singular Hermitian metric $h'_Q$ on $Q|_{\pi^{-1}(X^o\cap U)}$ so that $h'_Q|_{\pi^\ast E}\sim \pi^\ast h$ on $\pi^{-1}(X^o\cap U)$ and
		\begin{align}\label{align_tame}
			(\sum_{i=1}^r\|\pi^\ast f_i\|^2)^ch_Q\lesssim h'_Q
		\end{align}
		for some $c\in\bR$. Here $\{f_1,\dots,f_r\}$ is an arbitrary set of local generators of the ideal sheaf defining $\widetilde{U}\backslash \pi^{-1}(X^o)\subset \widetilde{U}$.
	\end{enumerate}
\end{defn}
\begin{rmk}
	The  following are typical examples of tame Hermitian metrics.
		\begin{itemize}
			\item A continuous Hermitian metric.
			\item Any singular Hermitian metric of type $e^{-\varphi}h$ on a vector bundle is tame. Here $h$ is a smooth metric and $\varphi$ is a quasi-psh function.
			\item The Hodge metric of a variation of Hodge structure is tame at its boundary points. This is a consequence of the norm estimate for the Hodge metric, which was established by Schmid \cite{Schmid1973} and Cattani-Kaplan-Schmid \cite{Cattani_Kaplan_Schmid1986}.
			\item A tame harmonic metric on a harmonic bundle remains tame at boundary points. This conclusion is drawn from the norm estimate of the tame harmonic metric, as established by Simpson \cite{Simpson1990} and Mochizuki \cite{Mochizuki20072, Mochizuki20071}.
	\end{itemize}
\end{rmk}
\begin{prop}\label{prop_S_coherent}
	Assume that the holomorphic vector bundle $(E,h)$ with a singular Hermitian metric satisfies the following conditions.
	\begin{enumerate}
\item For every point $x\in X$ there is a neighborhood $U$ of $x$, a bounded $C^\infty$ function $\varphi$ on $U\cap X^o$ such that $\sqrt{-1}\Theta_{e^{-\varphi}h}(E)\geq_{\rm Nak}^s0$ holds on $U\cap X^o$.
\item The holomorphic vector bundle $(E,h)$ is tame on $X$.
\end{enumerate}
Then $S_X(E,h)$ is a coherent sheaf.
\end{prop}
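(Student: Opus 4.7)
The plan is to verify coherence locally at each $x\in X$. By the tameness hypothesis, shrink to a neighborhood $U$ of $x$ on which there exist a proper bimeromorphic $\pi\colon\widetilde U\to U$, biholomorphic over $U\cap X^o$, and a holomorphic vector bundle $(Q,h_Q)$ with smooth Hermitian metric on $\widetilde U$ satisfying $\pi^\ast E\hookrightarrow Q$ on $\pi^{-1}(U\cap X^o)$, together with the polynomial estimate $g^c h_Q\lesssim h_Q'$, where $g=\sum\|\pi^\ast f_i\|^2$ and $h_Q'|_{\pi^\ast E}\sim\pi^\ast h$. After a further resolution one may assume $\widetilde U$ is smooth and $D:=\widetilde U\setminus\pi^{-1}(X^o)$ is a simple normal crossing divisor. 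By the functoriality in Proposition \ref{prop_L2ext_birational}, $S_X(E,h)|_U=\pi_\ast S_{\widetilde U}(\pi^\ast E,\pi^\ast h)$, so since $\pi$ is proper and proper pushforward preserves coherence, it suffices to prove that $S_{\widetilde U}(\pi^\ast E,\pi^\ast h)$ is coherent on $\widetilde U$.

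The tameness estimate yields the pointwise bound $\{\alpha,\alpha\}_{h_Q}\lesssim g^{-c}\{\alpha,\alpha\}_{\pi^\ast h}$ for every $\pi^\ast E$-valued $(n,0)$-form $\alpha$ on $X^o$. Consequently, any local $L^2$-section of $(\pi^\ast E,\pi^\ast h)$, regarded as a $Q$-valued form through $\pi^\ast E\hookrightarrow Q$, extends meromorphically across $D$ with poles (or enforced zeros) whose order is bounded in terms of $c$. Hence $S_{\widetilde U}(\pi^\ast E,\pi^\ast h)$ sits as an $\sO_{\widetilde U}$-submodule of the coherent sheaf $\sG:=\omega_{\widetilde U}\otimes Q\otimes\sO_{\widetilde U}(kD)$ for a suitable integer $k$; in particular each stalk is finitely generated by Noetherianity.

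To upgrade stalkwise generation to local finite generation near $x$, I would run a Nadel-type argument against the ambient $\sG$: the Nakano semi-positivity $\sqrt{-1}\Theta_{e^{-\varphi}h}(E)\geq_{\rm Nak}^s 0$ and boundedness of $\varphi$ allow invoking the optimal Ohsawa-Takegoshi extension theorem (Theorem \ref{thm_OTextension}) on $X^o$, so germs of $L^2$-sections lift from slices to genuine sections on a neighborhood with an effective $L^2$-bound (up to the harmless factor $e^{\pm\|\varphi\|_\infty}$). Combined with the strong Noetherian property of the ambient coherent sheaf $\sG$, this yields finitely many sections of $S_{\widetilde U}(\pi^\ast E,\pi^\ast h)$ on a neighborhood of $x$ generating every nearby stalk. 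The main obstacle is executing this last step at boundary points $x\in D$ where $(\pi^\ast E,\pi^\ast h)$ is not even defined: tameness is precisely the condition that makes it feasible, since it controls the degeneration of $\pi^\ast h$ against the smooth $h_Q$ by $g^{-c}$, so the Ohsawa-Takegoshi extensions produced on $X^o$ have $L^2$-norms uniformly bounded up to $D$ and slot into the Nadel-style strong-Noetherian argument despite the missing metric on $D$.
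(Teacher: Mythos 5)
Your reduction steps match the paper: localize, replace $h$ by $e^{-\varphi}h$ (harmless since $\varphi$ is bounded), resolve so that the boundary is a simple normal crossing divisor $D$, use the functoriality of Proposition \ref{prop_L2ext_birational} together with properness of $\pi$, and use tameness to embed $S_{\widetilde U}(\pi^\ast E,\pi^\ast h)$ into a coherent ambient sheaf of the form $\omega_{\widetilde U}\otimes Q\otimes\sO_{\widetilde U}(kD)$, so that the strong Noetherian property produces a coherent subsheaf $\sJ$ generated by global $L^2$-sections. All of this is exactly how the paper begins. The gap is in the only step where real work remains: showing that $\sJ$ exhausts every stalk of $S_{\widetilde U}(\pi^\ast E,\pi^\ast h)$. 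Stalkwise finite generation over the Noetherian local ring is indeed automatic and insufficient, and your plan to close the gap by ``running a Nadel-type argument'' via the Ohsawa--Takegoshi theorem does not work as described. Theorem \ref{thm_OTextension} extends only the \emph{value} (a $0$-jet) of a section along a fiber of a projective map, and only at points where $(E,h)$ is defined and $h\not\equiv+\infty$; it gives no control of higher-order jets, and it is simply unavailable at the boundary points $x\in D$, which is precisely where generation of the stalk must be proved. Asserting that the extensions ``have $L^2$-norms uniformly bounded up to $D$ and slot into the strong-Noetherian argument'' does not explain how a germ defined only on a small neighborhood of $x\in D$ is recognized as lying in $\sJ_x$.

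What is actually needed, and what the paper does, is a jet-approximation argument: by Krull's intersection theorem it suffices to prove $\sJ_x+S_{\widetilde X}(E,h)_x\cap m_{\widetilde X,x}^{k+1}Q=S_{\widetilde X}(E,h)_x$ for all $k$, and this is achieved not by extension from a fiber but by solving $\dbar\beta=\dbar(\lambda\alpha)$ with the singular weight $\psi_k=2(n+k+rm)\log|z-x|+|z|^2$, using the H\"ormander/Cataldo estimate for singular Nakano semi-positive metrics together with a complete K\"ahler metric on $X^o$. The exponent is calibrated so that the tameness loss $|z_1\cdots z_r|^{2m}$ is absorbed: the resulting correction $\beta$ satisfies $z_1^m\cdots z_r^m f_i\in m_{\widetilde X,x}^{k+1+rm}$ for its coefficients in the $Q$-frame, hence $\beta_x\in m_{\widetilde X,x}^{k+1}Q$, while $\lambda\alpha-\beta$ is a global section, i.e.\ lies in $\sJ$. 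This interplay between the tameness exponent and the pole order of the weight is the mechanism that handles the boundary points where the metric is missing; your proposal identifies the right obstacle but does not supply this mechanism, so as written the proof is incomplete at its decisive step.
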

Notice that Condition (1) implies that $(E,h)$ is tame on $X^o$.
\begin{proof}
		Since the problem is local, we assume that $X$ is a germ of complex space. By replacing $h$ by $e^{-\varphi}h$ for some smooth bounded function $\varphi$ (this does not alter $S_X(E,h)$) we may assume that $(E,h)$ is Nakano semi-positive.
	Let $\pi:\widetilde{X}\to X$ be a desingularization so that $\pi$ is biholomorphic over $X^o$ and $D:=\pi^{-1}(X\backslash X^o)$ is a simple normal crossing divisor. For the sake of convenience, we will consider $X^o\subset \widetilde{X}$ as a subset. Since $(E,h)$ is tame, we assume the existence of a Hermitian vector bundle $(Q,h_Q)$ on $\widetilde{X}$ such that $E$ is a subsheaf of $Q|_{X^o}$ and there exists an integer $m\in \bN$ satisfying 
	\begin{align}\label{align_tame_1}
		|z_1\cdots z_r|^{2m}h_Q\lesssim h_Q'
	\end{align}
	where $z_1,\cdots,z_n$ are local coordinates on $\widetilde{X}$ with respect to which $D=\{z_1\cdots z_r=0\}$, and where  $h_Q'$ denotes a singular Hermitian metric on $Q|_{X^o}$ such that  $h_Q'|_E\sim h$.
It follows from Proposition \ref{prop_L2ext_birational} that there is an isomorphism
	\begin{align*}
		S_X(E,h)\simeq \pi_\ast\left(S_{\widetilde{X}}(E,h)\right).
	\end{align*}
	Since $\pi$ is a proper map, it suffices to show that $S_{\widetilde{X}}(E,h)$ is a coherent sheaf on $\widetilde{X}$. Since the problem is local and $\widetilde{X}$ is smooth, we may assume that $\widetilde{X}\subset\bC^n$ is the unit ball, such that $D=\{z_1\cdots z_r=0\}$. Without loss of generality we assume that $Q$ admits a global holomorphic frame $\{e_1,\dots,e_{l}\}$ and $h_0$ is the trivial metric associated with this frame, i.e.,
	\begin{align}\label{align_orthogonal_frame}
		\langle e_i,e_j\rangle_{h_0}=\begin{cases}
			1, & i=j \\
			0, & i\neq j
		\end{cases}.
	\end{align}
	 Since $Q$ is coherent,  the space $\Gamma(\widetilde{X},S_{\widetilde{X}}(E,h))$ generates a coherent subsheaf $\sJ$ of $Q$ by strong Noetherian property for coherent sheaves. We have the inclusion 
	$\sJ\subset S_{\widetilde{X}}(E,h)$ by the construction. It remains to prove the converse. By Krull's theorem (\cite[Corollary 10.19]{Atiyah1969}), it suffices to show that
	\begin{align}\label{align_Krull}
		\sJ_x+S_{\widetilde{X}}(E,h)_x\cap m_{\widetilde{X},x}^{k+1}Q=S_{\widetilde{X}}(E,h)_x,\quad\forall k\geq0,\quad\forall x\in\widetilde{X}.
	\end{align}
	Let $\alpha\in S_{\widetilde{X}}(E,h)_x$ be defined in a precompact neighborhood $V$ of $x$. Choose a $C^\infty$ cut-off function $\lambda$ such that $\lambda\equiv1$ near $x$ and ${\rm supp}\lambda\subset V$. 
Let
\begin{align*}
	\psi_k(z):=2(n+k+rm)\log|z-x|+|z|^2
\end{align*}
and $h_{\psi_k}:=e^{-\psi_k}h$, where $|z|^2:=\sum_{i=1}^n|z_i|^2$.  Let $\omega_0:=\sqrt{-1}\ddbar|z|^2$. Then
\begin{align*}
	\sqrt{-1}\Theta_{h_{\psi_k}}(E)= \sqrt{-1}\ddbar\psi_k+\sqrt{-1}\Theta_{h}(E)\geq^s_{\rm Nak} \omega_0.
\end{align*}
Since ${\rm supp}(\lambda\alpha)\subset V$ and $\dbar(\lambda\alpha)=0$ near $x$, we know that
\begin{align*}
	\|\dbar(\lambda\alpha)\|^2_{\omega_0,h_{\psi_k}}\sim \|\dbar(\lambda\alpha)\|^2_{\omega_0,h}\leq\|\dbar\lambda\|^2_{L^\infty}\|\alpha\|^2_{\omega_0,h}+|\lambda|^2\|\dbar\alpha\|^2_{\omega_0,h}<\infty
\end{align*}
Since there is a complete K\"ahler metric on $X^o$ by \cite[Lemma 2.14]{SZ2022},  \cite[Proposition 4.1.1]{CataldoAndrea1998} (see also  \cite[Th\'eor\`eme 5.1]{Demailly1982})  gives a solution to the equation $\dbar\beta=\dbar(\lambda\alpha)$ so that
\begin{align}\label{align_norm_beta_psi}
	\|\beta\|^2_{\omega_0,h}\lesssim\int_{X^o}|\beta|^2_{\omega_0,h}|z-x|^{-2(n+k+rm)}{\rm vol}_{\omega_0}\lesssim \|\dbar(\lambda\alpha)\|^2_{\omega_0,h_{\psi_k}}<\infty.
\end{align}
Thus $\gamma=\beta-\lambda\alpha$ is holomorphic and $\gamma\in \Gamma(\widetilde{X},S_{\widetilde{X}}(E,h))$. 

Notice that $\dbar\beta=0$ near $x$. We may shrink $\widetilde{X}$ and assume that
$$\beta=\sum_{i=1}^l f_ie_idz_1\wedge\cdots\wedge dz_n$$
for some holomorphic functions $f_1,\dots,f_l\in\sO_{\widetilde{X}}(X^o)$.
Noticing that $h_Q\sim h_0$, we can deduce  from  (\ref{align_tame_1}), (\ref{align_orthogonal_frame}) and (\ref{align_norm_beta_psi}) that 
\begin{align*}
	&\sum_{i=1}^l\int_{X^o}|f_i|^2|z_1\cdots z_r|^{2m}|z-x|^{-2(n+k+rm)}{\rm vol}_{\omega_0}\\\nonumber
	=&\int_{X^o}|\beta|^2_{\omega_0,h_0}|z_1\cdots z_r|^{2m}|z-x|^{-2(n+k+rm)}{\rm vol}_{\omega_0}\\\nonumber
	\lesssim&\int_{X^o}|\beta|^2_{\omega_0,h}|z-x|^{-2(n+k+rm)}{\rm vol}_{\omega_0}<\infty.
\end{align*}
This implies that for every $i=1,\dots,l$, we have $z_1^m\cdots z_r^m f_i\in m_{\widetilde{X},x}^{k+1+rm}$ (\cite[Lemma 5.6]{Demailly2012}). Consequently, $\beta_x$ belongs to $m_{\widetilde{X},x}^{k+1}Q$, and we establish the validity of  (\ref{align_Krull}).
\end{proof}
\subsection{Example: parabolic Higgs bundle}
We use the notations in \S \ref{section_exp_Higgs}. Let $X$ be a smooth projective variety and $D$ a reduced simple normal crossing divisor on $X$. Consider $(H, \{{_E}H\}_{E\in{\rm Div}_D(X)}, \theta)$ as a locally abelian parabolic Higgs bundle on $(X,D)$, which is polystable with respect to an ample line bundle $A$ on $X$. Let $h$ be a tame harmonic metric on $H|_{X\backslash D}$, which is compatible with the parabolic structure. The existence of such a metric is ensured by Simpson \cite{Simpson1990} for algebraic curves and by Mochizuki \cite{Mochizuki2006} in higher dimensions. Let $\overline{\theta}$ be the adjoint of $\theta$, and $\partial$ be the unique $(1,0)$-connection such that $\partial + \dbar$ is compatible with $h$. Consequently, $(H|_{X\backslash D}\otimes_{\sO_{X\backslash D}}\sA^0_{X\backslash D}, \nabla:=\partial+\dbar+\theta+\overline{\theta})$ specifies a meromorphic flat connection that remains regular along the divisor $D$. 
 Let $\nabla=\nabla^{1,0}+\nabla^{0,1}$ be the decomposition with respect to the bi-degree. Notice that $\nabla^{1,0}=\partial+\theta$ and $\nabla^{0,1}=\dbar+\overline{\theta}$.

Let $K\subset H|_{X\backslash D}$ be a locally free subsheaf such that the following conditions hold:
\begin{itemize}
	\item $\nabla^{0,1}(K)=0$, i.e. $K$ is holomorphic with respect to both the complex structures $\dbar$ and $\nabla^{0,1}$.
	\item $(\nabla-\theta)(K)\subset K\otimes\sA^{1,0}_{X\backslash D}$.
\end{itemize}
Let $(F,h_F)$ be an arbitrary Nakano semi-positive Hermitian vector bundle on $X$. Let $L$ be a line bundle on $X$ such that $L\simeq_{\mathbb{R}}B+N$, where $B$ is a semi-positive $\mathbb{R}$-divisor and $N$ is an $\mathbb{R}$-divisor on $X$ which is supported on $D$. Let $\varphi_N$ be a weight function associated with $N$. By \cite[Lemma 3.8]{SZtwisted}, there is a singular Hermitian metric $h_L$ on $L$ such that the following conditions hold:
\begin{enumerate}
	\item $h_L$ is smooth over $X\backslash{\rm supp}(N)$.
	\item \begin{align}\label{align_L_0}
		\sqrt{-1}\Theta_{h_L}(L|_{X\backslash{\rm supp}(N)})=\sqrt{-1}\Theta_{h_B}(B)|_{X\backslash{\rm supp}(N)}\geq0;
	\end{align} 
	\item   \begin{align}\label{align_norm_est_L}
		|e|_{h_L}\sim\exp(-\varphi_N)
	\end{align} for  a local generator $e$ of $L$. 
\end{enumerate}
It can be shown that $hh_Fh_L$ has Nakano semi-positive curvature on $X\backslash D'$ and is tame on $X$ \cite[Lemma 3.10]{SZtwisted}. Moreover, one has the following.
\begin{thm}\cite[Corollary 3.13]{SZtwisted}
	$\omega_X\otimes (P{_{D-N,(2)}}(H)\cap j_\ast K)\otimes F\otimes L\simeq S_X(K\otimes F|_{X\backslash D'}\otimes L|_{X\backslash D'},hh_Fh_L)$. 
\end{thm}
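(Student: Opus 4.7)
The plan is to identify, point by point on $X$, which holomorphic sections of $\omega_X\otimes K\otimes F\otimes L$ defined away from $D\cup {\rm supp}(N)$ are locally $L^2$ with respect to $hh_Fh_L$, and to match this condition against the combinatorial definition of $P_{D-N,(2)}(H)\cap j_\ast K$. Since $h_F$ is smooth, Lemma \ref{lem_L2_tensor} lets us peel off $F$ and reduce to the case $F=\sO_X$. The problem is local, so I would pass to a polydisk chart $\Delta^n$ with coordinates $(z_1,\ldots,z_n)$ such that $D=\{z_1\cdots z_r=0\}$ and write $N=\sum_{i=1}^r n_iD_i$. Choosing a local generator $e$ of $L$, the estimate $|e|_{h_L}\sim e^{-\varphi_N}\sim\prod_i|z_i|^{-n_i}$ from (\ref{align_norm_est_L}) reduces the $L^2$ condition on a section $\alpha=\beta\otimes e\otimes dz_1\wedge\cdots\wedge dz_n$ with $\beta\in j_\ast K$ to the local integrability of $|\beta|_h^2\prod_i|z_i|^{-2n_i}$ against Lebesgue measure.

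The technical engine is Mochizuki's asymptotic estimate for the tame harmonic metric $h$. For each boundary divisor $D_i$, let $N_i$ denote the nilpotent part of ${\rm Res}_{D_i}(\theta)$ and $W_{\bullet,i}$ the associated monodromy weight filtration on ${_E}H$. Mochizuki constructs local frames of ${_E}H$ compatible with the parabolic filtration that simultaneously split all the $W_{\bullet,i}$; for a frame section $s$ of parabolic weight $a_i$ and weight $m_i$ in $W_{\bullet,i}$ along $D_i$, one obtains $|s|_h^2\sim\prod_{i=1}^r|z_i|^{2a_i}(-\log|z_i|)^{m_i}$ up to smooth nonvanishing factors. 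The hypotheses $\nabla^{0,1}K=0$ and $(\nabla-\theta)K\subset K\otimes\sA^{1,0}_{X\backslash D}$ ensure that $K$ is preserved by both the Higgs and the flat holomorphic structures, so these asymptotics remain meaningful on $K$, and the intersection ${_E}H\cap j_\ast K$ is compatible with the weight decomposition.

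Combining the two estimates, $\beta\in j_\ast K$ is $L^2$ with respect to $h\prod_i|z_i|^{-2n_i}$ near a boundary point iff, component by component in the weight decomposition of $\beta$, the integral $\int\prod_i|z_i|^{2(a_i-n_i)}(-\log|z_i|)^{m_i}\,dV$ converges. A standard one-variable computation shows this integrability is equivalent to the condition that $\beta$ lie in ${_{<D-N}}H+\bigcap_{i=1}^rW_{-2,i}({_{D-N}}H)$, i.e.\ in $P_{D-N,(2)}(H)$ by its very definition. The main obstacle is making Mochizuki's simultaneous norm estimate on intersecting boundary components precise enough to combine cleanly with the weight $\varphi_N$ from the $L$-side and with the two-complex-structure constraint imposed on $K$; once that is carried out, the desired isomorphism follows by intersecting with $j_\ast K$ term by term in the weight decomposition.
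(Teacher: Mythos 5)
This statement is not proved in the paper at all: it is imported verbatim from \cite[Corollary 3.13]{SZtwisted}, so there is no internal proof to compare with. Your overall route (peel off $F$ by Lemma \ref{lem_L2_tensor}, use the estimate (\ref{align_norm_est_L}) to convert the $L$-factor into the weight $e^{-2\varphi_N}$, then characterize the locally $L^2$ holomorphic sections by the norm estimate for the tame harmonic metric) is the natural one and is surely close in spirit to the cited argument. But as written the sketch has two concrete gaps.

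First, the several-variable norm estimate is not the product formula you quote. On $(\Delta^\ast)^r\times\Delta^{n-r}$, the Mochizuki (and Cattani--Kaplan--Schmid) estimate is only valid on sectors of the form $|z_1|\leq |z_2|^{k}\leq\cdots$ and involves ratios of consecutive logarithms weighted by the filtrations attached to the partial sums of the residues (relative monodromy filtrations), not independent factors $(-\log|z_i|)^{m_i}$ with $m_i$ the weight for the individual filtration $W_{\bullet,i}=W({\rm Res}_{D_i}\theta)$. Consequently the passage from joint integrability of $|\beta|^2_h\prod_i|z_i|^{-2n_i}$ to the membership condition ${_{<D-N}}H+\bigcap_{i=1}^rW_{-2,i}({_{D-N}}H)$ is precisely the nontrivial combinatorial step of the cited result; calling it "a standard one-variable computation" defers the actual content. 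Second, your claim that holomorphicity and weak transversality of $K$ make ${_{D-N}}H\cap j_\ast K$ compatible with the weight decomposition is both unjustified ($K$ need not be compatible with the filtrations $W_{\bullet,i}$) and unnecessary. The clean argument proves the identity for $H$ alone, namely that the subsheaf of $j_\ast\bigl(H|_{X\backslash D}\bigr)$ of holomorphic sections locally $L^2$ with respect to $he^{-2\varphi_N}$ equals $P_{D-N,(2)}(H)$; since $h|_K$ is the restricted metric, intersecting with $j_\ast K$ is then a tautology, and no weight decomposition of $\beta\in K$ should be attempted. The two hypotheses on $K$ play no role in this sheaf identification; they are needed only to verify the curvature and tameness hypotheses of Theorem \ref{thm_main}. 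You should also record the preliminary step that an $L^2$ holomorphic section on the punctured polydisk is automatically meromorphic along $D$ (so that the frame of ${_{D-N}}H$ applies to it); this follows from the polynomial lower bound on $h$ furnished by compatibility with the parabolic structure, i.e.\ by tameness, exactly as in the proof of Proposition \ref{prop_S_coherent}.
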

Thus Theorem \ref{thm_main} implies Theorem \ref{thm_Higgs}.
\subsection{Example: multiplier $S$-sheaf}
We use the notation in \S \ref{section_exp_S_sheaf}. Note that Griffiths's curvature formula ensures that $(S(\bV),h_Q)$ is Nakano semi-positive (\cite[Theorem 2.3]{SC2021}, see also \cite[Lemma 7.18]{Schmid1973}). Thus $(S(\bV)\otimes F,e^{-\varphi}h_Qh)$ is Nakano semi-positive. The tameness of $(S(\bV),h_Q)$ follows from the theory of degeneration of Hodge structure (see \cite[Proposition 5.4]{SZ2022}). Therefore $(S(\bV)\otimes F,e^{-\varphi}h_Qh)$ is tame on $X$. By Lemma \ref{lem_L2_tensor} one has the following.
\begin{thm}
	$S(IC_X(\bV),\varphi)\otimes F\simeq S_X(S(\bV)\otimes F,e^{-\varphi}h_Qh)$.
\end{thm}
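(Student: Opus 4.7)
The plan is to deduce this isomorphism as a direct instance of the tensor-product formula for $S_X$ recorded in Lemma \ref{lem_L2_tensor}. Recall that by definition $S(IC_X(\bV),\varphi) = S_X(S(\bV), e^{-\varphi}h_Q)$, where $S(\bV) = \cF^{\max\{k\mid \cF^k\neq 0\}}$ is the top nonzero Hodge piece, equipped on $X^o$ with the singular Hermitian metric $e^{-\varphi}h_Q$, and $F$ is a holomorphic vector bundle on $X$ endowed with a smooth Hermitian metric $h$. Both sides of the claimed isomorphism are therefore $S_X$-sheaves attached to $S(\bV)$ twisted by $F$, and the goal is to match them using only the smoothness of the metric on $F$.

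First I would invoke Lemma \ref{lem_L2_tensor} with $E = S(\bV)$ (on $X^o$), singular metric $e^{-\varphi}h_Q$, and auxiliary smooth Hermitian bundle $(F,h)$ defined on all of $X$. This yields
\begin{equation*}
S_X\bigl(S(\bV)\otimes F|_{X^o},\; e^{-\varphi}h_Q\cdot h\bigr) \simeq S_X\bigl(S(\bV),\; e^{-\varphi}h_Q\bigr)\otimes F.
\end{equation*}
Substituting the definition of the multiplier $S$-sheaf on the right-hand side produces $S(IC_X(\bV),\varphi)\otimes F$, which is precisely the claimed isomorphism.

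The only point worth checking is that the set-up genuinely falls under Lemma \ref{lem_L2_tensor}: $h$ is smooth on all of $X$ (not just on $X^o$), so the pointwise pairings $\{\alpha,\alpha\}$ attached to $e^{-\varphi}h_Q h$ and to $e^{-\varphi}h_Q$ differ only by bounded smooth factors on compact subsets, and the local integrability conditions on each side therefore coincide after pairing a local frame of $F$. No Nakano-positivity or tameness input is needed for the formal isomorphism itself; those hypotheses were used in Proposition \ref{prop_S_coherent} and in the paragraph immediately preceding the statement only to ensure that both sides are coherent $\sO_X$-modules and that $(S(\bV)\otimes F, e^{-\varphi}h_Q h)$ satisfies the hypotheses of Theorem \ref{thm_main}. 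I do not anticipate a substantive obstacle; the nontrivial content has been absorbed into Lemma \ref{lem_L2_tensor}, and the present statement is essentially a direct unpacking of definitions.
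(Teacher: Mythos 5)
Your proposal is correct and coincides with the paper's own justification: the theorem is obtained by applying Lemma \ref{lem_L2_tensor} with $E=S(\bV)$ endowed with the singular metric $e^{-\varphi}h_Q$ and the smooth Hermitian bundle $(F,h)$, then substituting the definition $S(IC_X(\bV),\varphi)=S_X(S(\bV),e^{-\varphi}h_Q)$. Your observation that the Nakano semi-positivity and tameness discussed nearby are needed only to place $(S(\bV)\otimes F,e^{-\varphi}h_Qh)$ under the hypotheses of Theorem \ref{thm_main}, not for the isomorphism itself, is also accurate.
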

Thus Theorem \ref{thm_main} implies Theorem \ref{thm_multi_S}.
\section{proof of the main theorem}
The proof of the main theorem is strongly influenced by the one in \cite{HPS2018}. 
\subsection{Construction of the metric on its locally free part}
To define the singular Hermitian metric $H$ on $\sF=f_\ast (S_{X/Y}(E,h))$, we first construct the metric on some Zariski open subset $Y\setminus Z$. Then, we extend it over $Z$ using the $L^2$ extension theorem \ref{thm_OTextension}. The constructions and proofs follow a similar approach as \cite{HPS2018}. It is worth noting that the tameness condition allows the arguments in \cite{HPS2018} to apply to those degenerate $(E,h)$ as well.
Thanks to Proposition \ref{prop_L2ext_birational}, we can choose a resolution of singularity for $X$ and assume that $X$ is a complex manifold throughout the proof.
Notice that $S_{X/Y}(E,h)$ is a torsion-free coherent sheaf (Proposition \ref{prop_S_coherent}). We begin by selecting a closed, nowhere dense analytic subset $Z\subset Y$ that satisfies the following conditions:
\begin{enumerate}
	\item The morphism $f$ is submersive over $Y\setminus Z$.
	\item $X_y\cap X^o\neq \emptyset$ for every $y\in Y\setminus Z$.
	\item The sheaf $\sF$ is locally free on $Y\setminus Z$.
	\item $\sF$ has the base change property on $Y\setminus Z$, that is, the natural morphism
	$\sF_y\to H^0(X_y,S_{X/Y}(E,h)|_{X_y})$ is an isomorphism for every $y\in Y\backslash Z$.
\end{enumerate}
Consequently, when restricted to the open subset $Y\setminus Z$, the sheaf $\sF$ forms a holomorphic vector bundle $F$ with a rank of $r\geq 1$. 
Conditions (3) and (4) ensure that whenever $y\in Y\setminus Z$ and $X_y:=f^{-1}(y)$, we have $F_y=\sF|_y=f_\ast(S_{X/Y}(E,h))|_y=H^0(X_y,S_{X/Y}(E,h)|_{X_y})$.

Let $(E_y,h_y)$ denote the restriction of $(E,h)$ to $X_y\cap X^o$. Then Theorem \ref{thm_OTextension} implies the following lemma.	
	\begin{lem}
		For any $y\in Y\setminus Z$, we have the inclusion
		$$H^0(X_y,S_{X_y}(E_y,h_y))\subset F_y=H^0(X_y,S_{X/Y}(E,h)|_{X_y}).$$

	\end{lem}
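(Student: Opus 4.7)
The plan is to produce the inclusion by applying the Ohsawa--Takegoshi extension theorem (Theorem \ref{thm_OTextension}) fiberwise and then identifying the resulting section with an element of $F_y$ through the base change property. Let $\alpha\in H^0(X_y,S_{X_y}(E_y,h_y))$; by definition $\alpha$ is a holomorphic $E_y$-valued $(n-r,0)$-form on $X_y\cap X^o$ with $\{\alpha,\alpha\}$ locally integrable on $X_y$. Since $X_y$ is projective, compactness upgrades local integrability to the global bound $\|\alpha\|_{h_y}^2<\infty$, which is the hypothesis needed to invoke the $L^2$-extension theorem.

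Shrink $Y$ to a coordinate neighborhood of $y$ in $Y\setminus Z$ biholomorphic to the unit ball $B\subset\bC^r$ (with $y\mapsto 0$), and replace $X$ by $X_B:=f^{-1}(B)$. By condition~(1) the origin is a regular value of $f$; the main theorem provides $\Theta_h(E)\geq_{\rm Nak}^s 0$ on $X^o$; and $h_y\not\equiv+\infty$, for otherwise the local integrability of $\{\alpha,\alpha\}$ would force $\alpha\equiv 0$ and the conclusion would be trivial. Theorem \ref{thm_OTextension} then yields
$$\beta\in H^0(X^o\cap X_B,\omega_{X^o\cap X_B}\otimes E)\quad\text{with}\quad \beta|_{X_y}=\alpha\wedge df,\quad \|\beta\|_h^2\leq\mu(B)\|\alpha\|_{h_y}^2<\infty.$$

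The next step is to promote $\beta$ to a section of the coherent sheaf $S_{X_B}(E,h)$ on all of $X_B$, i.e., to verify that $\{\beta,\beta\}$ is locally integrable near every point of $X_B$---including across the degeneration locus $X\setminus X^o$, where $(E,h)$ is nowhere defined. Since $f$ is proper and $B$ is relatively compact, every point of $X_B$ admits a precompact neighborhood on which the integral of $\{\beta,\beta\}$ is dominated by the global $L^2$-bound, furnishing the required local integrability. Consequently $\tilde\beta:=\beta\otimes f^\ast(\omega_Y^{-1})$ belongs to $H^0(X_B,S_{X_B/B}(E,h))=\sF(B)$. Via the base change property (condition~(4)) and the adjunction $\omega_{X/Y}|_{X_y}\simeq\omega_{X_y}$, the image of $\tilde\beta$ in $F_y$ is represented by $\tilde\beta|_{X_y}$, which under this adjunction equals $\alpha$. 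The assignment $\alpha\mapsto\tilde\beta|_{X_y}\in F_y$ is well-defined (two OT extensions of the same $\alpha$ have identical restriction to $X_y$) and injective (the restriction to $X_y\cap X^o$ recovers $\alpha$), yielding the desired inclusion.

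The main subtle point I anticipate is the upgrade from $\beta$, which the OT theorem produces only as an $L^2$-section over $X^o\cap X_B$, to an honest section of the coherent sheaf $S_{X_B}(E,h)$ on all of $X_B$: this forces us to combine the global $L^2$-bound from Theorem \ref{thm_OTextension} with the properness of $f$, the relative compactness of $B$, and the intrinsic local-integrability definition of $S_X(E,h)$ together with its coherence (Proposition \ref{prop_S_coherent}). Everything else follows directly from the definitions, the adjunction formula, and the setup of the main theorem.
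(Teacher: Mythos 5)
Your proposal is correct and follows essentially the same route as the paper: dispose of the case $h_y\equiv+\infty$, shrink $Y$ to a coordinate ball around $y$, apply Theorem \ref{thm_OTextension} to extend $\alpha$ to $\beta$ with the finite $L^2$-bound (which is exactly what makes $\beta$ a section of $S_X(E,h)$ over the preimage of the ball), and identify its restriction to $X_y$ with $\alpha$ inside $F_y$ via the base change property. Your extra remarks on global integrability of $\{\alpha,\alpha\}$ from compactness of $X_y$, on well-definedness, and on injectivity are fine elaborations of what the paper leaves implicit.
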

\begin{proof}
	If $h_y \equiv+ \infty$, then $H^0(X_y,S_{X_y}(E_y,h_y))$ is trivial. Therefore, we only need to consider the case when $h_y$ is not identically equal to $+\infty$. A small neighborhood $U$ of $y$ can be chosen such that it is biholomorphic to the open unit ball $B\in \bC^r$, and $\omega_Y$ is trivial on it. Given 
	$\alpha\in H^0(X_y,S_{X_y}(E_y,h_y))$, Theorem \ref{thm_OTextension}  provides a section $\beta\in H^0(U,S_{X}(E,h))$  such that $\beta|_{X_y}=\alpha\wedge df$.
\end{proof}
For each $y\in Y\setminus Z$, a singular Hermitian metric on $F_y$ can be defined as 
$$|\alpha|_{H,y}^2=\int_{X_y}\{\alpha,\alpha\}_{h_y}\in [0,+\infty],$$
and it is finite on $H^0(X_y,S_{X_y}(E_y,h_y))$.
This definition is valid because $X_y\setminus X^o$ has a measure of zero, so it does not cause any issues for the integral.

In order to patch the singular Hermitian metric $|-|_{H,y}$ together on $Y\setminus Z$, we select a point $y\in Y\setminus Z$ and an open neighborhood $U\subset Y\setminus Z$ that is biholomorphic to the open unit ball $B\subset \mathbb{C}^r$. By pulling everything back to $U$, we can assume  that $Y = B$, $Z = \emptyset$, and $y = 0$. Let's denote by $t_1,\dots,t_r$ the standard coordinates on $B$. Then the canonical bundle $\omega_B$ is trivialized by the global section $dt_1\wedge \cdots \wedge dt_r$, and the volume form on $B$ is given by

$$
d\mu=c_r(dt_1 \wedge\cdots dt_r) (d\bar{t}_1\wedge\dots d{\bar{t}}_r).
$$

We fix a holomorphic section $s\in H^0(B,F)$, and denote by $\beta=s\wedge (dt_1\wedge\cdots\wedge dt_r)\in H^0(B,\omega_B\otimes F)\simeq  H^0(X,S_X(E,h))$ the corresponding holomorphic $n$-form on $X$ with coefficients in $E$. 
Given that $f : X \rightarrow B$ is smooth, Ehresmann's fibration theorem implies that $X$ is diffeomorphic to the product $B\times X_0$. After selecting a K\"ahler metric $\omega_0$ on $X_0$, we can express
\begin{align}
	|\beta|_h^2=G\cdot d\mu\wedge \frac{\omega_0^{n-r}}{(n-r)!}.
\end{align}
Since $h$ is an increasing limit of $C^2$ metrics near every point, the function $G:B \times X_0\rightarrow [0, +\infty)$ is both lower semi-continuous and locally integrable.

 At every point $y\in B$, we then have, by construction,
\begin{align}
	|s(y)|_{H,y}^2=\int_{X_0}G(y,-)\frac{\omega_0^{n-r}}{(n-r)!}.
\end{align}
According to Fubini's theorem, the function $y \mapsto |s(y)|_{H,y}$ is measurable. Furthermore, since $X_0$ is compact and $G$ is locally integrable, $|s(y)|_{H,y} < \infty$ for almost every $y \in B$. As $F$ is coherent, it is generated over $B$ by finite many global sections. Therefore, the singular Hermitian inner product $|-|_{H,y}$ is finite and positive definite for almost every $y \in B$, and thus for almost every $y \in Y \setminus Z$. The above discussion ensures that the Hermitian inner products satisfy the conditions in Definition \ref{defn_shmonvb}, making them a singular Hermitian metric on $F$ on $Y \setminus Z$.
\begin{prop}
	 On $Y\setminus Z$, the singular Hermitian inner products 
	$|-|_{H,y}$ determine a singular Hermitian metric on the holomorphic vector bundle $F$.
\end{prop}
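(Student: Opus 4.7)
The plan is to verify the two conditions of Definition \ref{defn_shmonvb}: measurability of the norm of every local holomorphic section, and finiteness and positive definiteness of $|-|_{H,y}$ for almost every $y \in Y\setminus Z$. The setup preceding the proposition already reduces matters to a local statement on a ball $B \subset Y\setminus Z$ over which $f$ is smooth and $\omega_Y$ is trivial, and trivializes the fiberwise integrals via Ehresmann's theorem $X|_B \cong_{C^\infty} B \times X_0$ together with a choice of K\"ahler metric $\omega_0$ on $X_0$.

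For measurability, I would argue as follows. Given $s \in H^0(B, F)$, form $\beta = s \wedge (dt_1 \wedge \cdots \wedge dt_r) \in H^0(X|_B, S_X(E,h))$ and write $|\beta|_h^2 = G \cdot d\mu \wedge \tfrac{\omega_0^{n-r}}{(n-r)!}$ on $B \times (X_0 \cap X^o)$. The Nakano semi-positivity in the sense of approximations (Definition \ref{defn_singularNakanopositive}) expresses $h$ as an a.e.\ pointwise increasing limit of $C^2$ metrics $h_{j,s}$, so each finite-level approximation produces a continuous density $G_{j,s}$ with $G_{j,s} \nearrow G$ off a set of measure zero. Monotone convergence then shows that $G$ is measurable (indeed lower semi-continuous) and locally integrable. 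Fubini's theorem yields that $y \mapsto |s(y)|_{H,y}^2 = \int_{X_0} G(y,\cdot) \tfrac{\omega_0^{n-r}}{(n-r)!}$ is measurable.

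For finiteness almost everywhere, Fubini again ensures that for each such $s$ the set $\{y \in B : |s(y)|_{H,y}^2 = \infty\}$ has Lebesgue measure zero. Since $F$ is locally free of finite rank on $Y\setminus Z$, I pick finitely many local sections forming a frame on each relatively compact subset of $B$; the union of their exceptional sets still has measure zero, so for a.e. $y$ the form $|-|_{H,y}$ is finite on a basis of $F_y$ and hence finite as a Hermitian form on $F_y$. Positive definiteness at such a $y$ is then automatic: if $\alpha \in F_y \cong H^0(X_y, S_{X/Y}(E,h)|_{X_y})$ satisfies $|\alpha|_{H,y} = 0$, then $\{\alpha,\alpha\}_{h_y}$ vanishes a.e.\ on $X_y \cap X^o$; since $h$ is positive definite a.e.\ on $X^o$ by hypothesis and $\alpha$ is holomorphic on the connected manifold $X_y \cap X^o$, the identity principle forces $\alpha = 0$.

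The main obstacle is the regularity step for $G$: one needs measurability and local integrability of $G$ starting from the singular metric $h$, and this is exactly where the monotone approximation built into Definition \ref{defn_singularNakanopositive} is essential. Without the increasing $C^2$ approximations, neither the lower semi-continuity of $G$ nor the applicability of monotone convergence/Fubini could be ensured, and the candidate metric $|-|_{H,y}$ might fail to satisfy even the basic measurability axiom of Definition \ref{defn_shmonvb}. This is the structural reason why Cataldo's approximation-based notion of singular Nakano semi-positivity is chosen in the hypothesis of Theorem \ref{thm_main}.
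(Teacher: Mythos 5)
Your overall route is the paper's: reduce to a ball $B$ over which $f$ is smooth and $\omega_Y$ is trivial, pass from $s\in H^0(B,F)$ to $\beta=s\wedge(dt_1\wedge\cdots\wedge dt_r)$, write $\{\beta,\beta\}_h=G\cdot d\mu\wedge\frac{\omega_0^{n-r}}{(n-r)!}$ via Ehresmann's trivialization, get measurability of $y\mapsto|s(y)|_{H,y}$ from Fubini, finiteness a.e.\ from compactness of $X_0$ and local integrability of $G$, and then conclude using finitely many generating sections; your identity-principle argument for positive definiteness is a reasonable filling-in of a step the paper leaves terse. However, there is one genuine flaw in your justification: you derive the local integrability of $G$ from the monotone approximation $G_{j,s}\nearrow G$ "by monotone convergence." Monotone convergence only gives $\int G=\lim\int G_{j,s}$, and nothing in Definition \ref{defn_singularNakanopositive} bounds these integrals, so this limit could perfectly well be $+\infty$; increasing limits of continuous densities need not be locally integrable. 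The correct (and essentially tautological) source of local integrability is that $\beta$ is a holomorphic section of $S_X(E,h)$ over $f^{-1}(B)$, and by the very definition of the sheaf $S_X(E,h)$ this means $\{\beta,\beta\}_h$ is locally integrable near every point; that is what the paper uses, and your argument should cite this rather than the approximations.

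Relatedly, your closing structural claim is overstated: measurability of $G$ (hence of $y\mapsto|s(y)|_{H,y}$ via Fubini) does not hinge on the $C^2$ approximations at all, since measurability of $h$ is already an axiom in Definition \ref{defn_shmonvb}. What the increasing approximations actually buy, in the paper, is the \emph{lower semi-continuity} of $G$, which is not needed for the present proposition but is used later (in the upper semi-continuity argument for $\log|g|_{H^\ast}$, where Fatou's lemma is applied to the fiberwise integrals along a convergent sequence $y_k\to 0$). With the local-integrability step re-attributed to the definition of $S_X(E,h)$, the rest of your proof goes through and coincides with the paper's.
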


	\subsection{Extend the metric to the whole $Y$}
	The aim of this subsection is to extend the singular Hermitian metric from $Y\setminus Z$ to the entire $Y$. To achieve this, we will examine the measurable function $\psi:=\log |g|_{H^\ast}:Y\setminus Z\rightarrow [-\infty,+\infty]$, where $H^\ast$ is the induced singular Hermitian metric on $F^\ast$ and $g\in H^0(Y,\sF^\ast)$. Our goal is to demonstrate that the function $\psi$ is a plurisubharmonic function on $Y\setminus Z$ and is locally bounded near every point of $Z$, which will allow us to extend it as a plurisubharmonic function across the entire $Y$ using the Riemann extension theorem for holomorphic functions \cite[Theorem I.5.24]{Demailly2012}. 
	
	First, let us restate the Ohsawa-Takegoshi theorem as given in Theorem \ref{thm_OTextension} in a form that is more suitable for our subsequent purposes.
	\begin{lem}\label{lem_OT}
		For every embedding $\iota: B\rightarrow Y$ from the unit ball $B\subset \bC^{\dim Y}$ with $y = \iota(0)\in Y\setminus Z$, and for
		every 
		$\alpha\in F_y$ with $|\alpha|_{H,y} =1$, there is a holomorphic section $s\in H^0(B,\iota^\ast \sF)$ with
		$s(0) =\alpha$ 
		and
		$$\frac{1}{\mu(B)}\int_B|s|_H^2d\mu\leq 1.$$
	\end{lem}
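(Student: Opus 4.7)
The lemma is essentially a restatement of the Ohsawa-Takegoshi theorem \ref{thm_OTextension} in the form of the minimal extension property. My plan is to pull everything back via $\iota$ so that $Y$ becomes the unit ball and $y$ becomes the origin, invoke \ref{thm_OTextension} on the resulting projective family, and use Fubini to convert the $L^2$-bound on the total space into the averaged fiberwise bound demanded by the statement.

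First I would use $\iota$ to reduce to $Y=B$, $y=0$. Since $y\in Y\setminus Z$, condition (1) ensures $f:X\to B$ is submersive at $0$, condition (2) gives $X_0\cap X^o\neq\emptyset$, and conditions (3)--(4) provide $F_0=H^0(X_0,S_{X/Y}(E,h)|_{X_0})$. Using the trivialization $dt_1\wedge\cdots\wedge dt_r$ of $\omega_B$, I identify $\alpha$ with $\widetilde\alpha:=\alpha\wedge df\in H^0(X_0\cap X^o,\omega_{X_0}\otimes E|_{X_0\cap X^o})$; the construction of $H$ gives $\|\widetilde\alpha\|_{h_0}^2=|\alpha|_{H,0}^2=1$, and in particular $h_0\not\equiv +\infty$.

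Next I would apply Theorem \ref{thm_OTextension} to obtain $\beta\in H^0(X^o,\omega_X\otimes E)$ with $\beta|_{X_0}=\widetilde\alpha$ and $\|\beta\|_h^2\leq \mu(B)\cdot\|\widetilde\alpha\|_{h_0}^2=\mu(B)$. Since $\{\beta,\beta\}_h$ is globally integrable, hence locally integrable across $X\setminus X^o$, the form $\beta$ lies in $H^0(X,S_X(E,h))$ by definition of $S_X$. Dividing by $dt_1\wedge\cdots\wedge dt_r$ yields a section $s\in H^0(B,\sF)=H^0(B,\iota^\ast\sF)$, and the base-change property of condition (4) identifies $s(0)$ with $\alpha$.

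Finally I would establish the norm bound with Fubini: writing $|\beta|_h^2=G\cdot d\mu\wedge \omega_0^{n-r}/(n-r)!$ as in the paragraph preceding the lemma, we have $|s(y)|_{H,y}^2=\int_{X_0}G(y,-)\,\omega_0^{n-r}/(n-r)!$, and hence
\[
\int_B|s|_H^2\,d\mu \;=\; \int_B d\mu(y)\!\int_{X_y}\{s(y),s(y)\}_{h_y} \;=\; \int_X\{\beta,\beta\}_h \;=\; \|\beta\|_h^2 \;\leq\; \mu(B).
\]
The only real subtlety in this argument is the compatibility, along the fiber $X_0$ and over the entire family, between the fiberwise inner product defining $H$ and the global $\{\beta,\beta\}_h$ appearing in Theorem \ref{thm_OTextension}; but this compatibility has already been built into the construction of $H$ in the previous subsection, so the lemma follows directly. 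In particular there is no serious obstacle—the difficulty has been absorbed entirely into Theorem \ref{thm_OTextension} and the prior set-up of $H$.
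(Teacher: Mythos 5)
Your proposal is correct and follows essentially the same route as the paper: pull back to the unit ball, apply Theorem \ref{thm_OTextension} to $\alpha\wedge df$, trivialize $\omega_B$ by $dt_1\wedge\cdots\wedge dt_r$ to view the extension $\beta$ as a section $s$ of $\sF$, and use the scaling conventions/Fubini to convert $\|\beta\|_h^2\leq\mu(B)$ into the averaged bound. The extra details you supply (checking $h_0\not\equiv+\infty$, local integrability of $\{\beta,\beta\}_h$ so that $\beta\in H^0(X,S_X(E,h))$) are consistent with what the paper leaves implicit.
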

\begin{proof}
	After pulling everything back to $B$, we can assume that $Y = B$ and $y = 0$. By Theorem \ref{thm_OTextension}, since $|\alpha|_{H,0} = 1$, there exists an element $\beta \in H^0(X, S_X(E,h))$ such that $\beta|_{X_0} = \alpha \wedge df$ and $\|\beta\|_h^2 \leq \mu(B)$. We can trivialize the canonical bundle $\omega_B$ using $dt_1 \wedge \cdots \wedge dt_r$, which allows us to consider $\beta$ as a holomorphic section $s \in H^0(B, \iota^\ast\sF)$ with $s(0) = \alpha$. Additionally, $\frac{1}{\mu(B)}\int_B|s|_H^2d\mu \leq 1$, as $d\mu = c_r(dt_1 \wedge \cdots dt_r)\wedge (d\bar{t}_1 \wedge \cdots d\bar{t}_r)$.
\end{proof}
\begin{prop}\label{prop_boundedness}
Every point in $Y$ has an open neighborhood $U\subset Y$ such that
$\psi=\log |g|_{H^\ast}$ is bounded from above by a constant on $U\setminus Z$.
\end{prop}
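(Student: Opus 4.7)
The plan is to produce, for every point $y_0\in Y$, a neighborhood on which $|g(y)|_{H^\ast,y}$ is uniformly bounded. First I would fix a biholomorphism between a small neighborhood of $y_0$ and the ball $B_{2r}(0)\subset\bC^m$ (with $m=\dim Y$), calling this neighborhood $U$, and reduce the problem to producing a constant $C$ such that $|\langle g(y),\alpha\rangle|\le C$ for every $y\in B_r(0)\setminus Z$ and every $\alpha\in F_y$ with $|\alpha|_{H,y}\le 1$, since by definition
\[
|g(y)|_{H^\ast,y}^2 =\sup\bigl\{|\langle g(y),\alpha\rangle|^2:\alpha\in F_y,\ |\alpha|_{H,y}\le 1\bigr\}.
\]

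The next step is to invoke the Ohsawa--Takegoshi extension of Lemma \ref{lem_OT}, applied with a ball $B_r(y)\subset U$ centered at $y$ (which is allowed because $y\in Y\setminus Z$). This produces a holomorphic section $s=s_{y,\alpha}\in H^0(B_r(y),\sF)$ with $s(y)=\alpha$ and $\mu(B_r(y))^{-1}\int_{B_r(y)}|s|_H^2\,d\mu\le 1$. Because $g$ is a global section of $\sF^\ast$, the pairing $g(s)\in\sO(B_r(y))$ is an honest holomorphic function on $B_r(y)$ whose value at $y$ is precisely $\langle g(y),\alpha\rangle$. Since $|g(s)|^2$ is subharmonic, the submean value inequality then gives
\[
|\langle g(y),\alpha\rangle|^2\le \frac{1}{\mu(B_r(y))}\int_{B_r(y)}|g(s)|^2\,d\mu,
\]
so everything reduces to bounding the right-hand side by a constant depending only on $g|_U$, not on $y$, $\alpha$, or $s$.

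The hard part will be this last reduction. The naive pointwise estimate $|g(s)|^2\le|g|_{H^\ast}^2|s|_H^2$ is circular, since $|g|_{H^\ast}$ is precisely what we are trying to bound. To break the circularity I would shrink $U$ so that $\sF|_U$ admits a finite surjective presentation $\sO_U^a\twoheadrightarrow\sF|_U$ (possible by coherence from Proposition \ref{prop_S_coherent}), through which $g$ corresponds to a tuple $(g_1,\ldots,g_a)\in\sO(U)^a$ of holomorphic functions, uniformly bounded by some $M$ on the relatively compact $B_r(0)$. Each ball $B_r(y)$ being Stein, the section $s$ admits a holomorphic lift $\tilde s=(\tilde s_1,\ldots,\tilde s_a)\in\sO(B_r(y))^a$, and $g(s)=\sum_i g_i\tilde s_i$.

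The genuinely technical input---the main obstacle---is to guarantee that the lift $\tilde s$ can be chosen with $\int_{B_r(y)}|\tilde s|^2\,d\mu\le C\int_{B_r(y)}|s|_H^2\,d\mu$ for a constant $C$ independent of $y$. This is where the tameness hypothesis on $(E,h)$ should enter decisively: tameness (Definition \ref{defn_tame_Hermitian_bundle}) bounds the degeneration of the fiber-integral metric $H$ near $Z$ by polynomial factors, which is exactly what is needed to compare the quotient metric induced by the presentation with $H$ on $B_r(0)$ up to such factors, and thereby permits an $L^2$-division of Skoda type. Granting such a controlled lift, the bounds $|g_i|\le M$ together with the Ohsawa--Takegoshi $L^2$-estimate on $s$ yield $\int_{B_r(y)}|g(s)|^2\,d\mu\le C'\mu(B_r(y))$, and hence $|g(y)|_{H^\ast,y}\le \sqrt{C'}$ uniformly on $B_r(0)\setminus Z$, which is the desired local upper bound on $\psi=\log|g|_{H^\ast}$.
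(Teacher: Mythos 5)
Your first two steps (reduce to bounding $|g(\alpha)|$ over $\alpha$ with $|\alpha|_{H,y}\le 1$, then extend $\alpha$ by the Ohsawa--Takegoshi Lemma \ref{lem_OT} to a section $s$ with $s(y)=\alpha$ and normalized $L^2$-bound, so that $\psi(y)=\log|g(s)(y)|$) are exactly the paper's opening moves. But the heart of the proposition is the last reduction, and there your argument has a genuine gap: you assume, ``granting such a controlled lift,'' that through a presentation $\sO_U^a\twoheadrightarrow\sF|_U$ one can lift $s$ to $\tilde s\in\sO(B_r(y))^a$ with $\int_{B_r(y)}|\tilde s|^2\,d\mu\le C\int_{B_r(y)}|s|_H^2\,d\mu$ for a constant $C$ independent of $y$. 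That statement is essentially a uniform lower bound of the fiber-integral metric $H$ against a fixed background (quotient) metric near $Z$, which is precisely what the proposition asserts in dual form; so the unproven lemma is not a technical add-on but the crux itself, restated. Moreover, the justification you sketch cannot work as stated: tameness (Definition \ref{defn_tame_Hermitian_bundle}) is a condition on $(E,h)$ upstairs on $X$, comparing $h$ with a smooth metric $h_Q$ on a resolution up to polynomial factors; the paper nowhere establishes, and it does not follow formally, that $H$ itself degenerates at worst polynomially along $Z$. Even if it did, a polynomial lower bound $H\gtrsim d(\cdot,Z)^{2N}H_0$ would only give $|g|_{H^\ast}\lesssim d(\cdot,Z)^{-N}$, i.e.\ a bound blowing up near $Z$, not the boundedness claimed.

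The paper closes this gap by a different mechanism. It extends $\alpha$ not over a small ball centered at $y$ but over a fixed larger open set $V\supset\overline U$, so all the extended sections live in the single family $S_K\subset H^0(V,\sF)$ of Proposition \ref{prop_compactness}. Tameness is then used upstairs: after a resolution, the $L^2(h)$-bound on the corresponding $E$-valued $n$-forms translates (via the comparison with the smooth metric $h_Q$ and the factor $|z_1\cdots z_r|^{2m}$, inequality (\ref{align_tame_2})) into uniform $L^2$-bounds on scalar holomorphic coefficient functions, giving a Montel-type normality statement for $S_K$; a contradiction argument using the continuity of $g:H^0(V,\sF)\to H^0(V,\sO_Y)$ between Fr\'echet spaces then yields a uniform bound on $\sup_{\overline U}|g(s)|$ for all $s\in S_K$, and evaluating at $y$ bounds $\psi(y)$. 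If you want to salvage your route, you would have to prove your uniform division/lifting lemma, and the natural way to do so is essentially to reprove this compactness statement; as written, the proposal leaves the decisive estimate unestablished.
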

\begin{proof}
Given an arbitrary point $x\in Y$, we select two small open neighborhoods $U\subset V\subset Y$ of $x$, where $\overline{V}$ is compact,  $\overline{U}\subset V$, and for every point $y\in U$, there is an embedding $\iota: B \rightarrow Y$ of the unit ball $B\in \bC^r$ with $\iota(0) = y$ and $\iota(B)\subset V$.
Now, we aim to prove the existence of a constant $C$ such that $\psi\leq C$ on $U\setminus Z$.

Let $y\in U\setminus Z$ be a fixed point. If $\psi(y)=-\infty$, then there is nothing to prove. However, assuming $\psi(y)\neq -\infty$, we can use the definition of the metric on the dual bundle to find a vector $\alpha$ in $F_y$ that satisfies $|\alpha|_{H,y} =1$ and $\psi(y) = \log |g(\alpha)|$. We choose an embedding $\iota: B \rightarrow Y$ such that $\iota(0) = y$ and $\iota(B)\subset V$. According to Lemma \ref{lem_OT}, there exists a holomorphic section $s\in H^0(V,\sF)$ with $s(0) =\alpha$ and
$$\frac{1}{\mu(B)}\int_V|s|_H^2d\mu\leq 1.$$ 
Therefore, we have $\psi(y) = \log|g(s)|_y$, and the desired upper bound can be obtained from Proposition \ref{prop_compactness} below.
\end{proof}
\begin{prop}\label{prop_compactness}
	Fix a constant $K \geq 0$, and consider the set
$$S_K =\left\{s\in H^0(V, \sF)\mid\int_V |s|_H^2d\mu\leq K\right\}.$$
Then 
\begin{enumerate}
	\item every sequence $\{s_k\}\in S_K$ has a subsequence that converges uniformly on compact subsets;
	\item there is a constant $C \geq 0$ such that, for every section $s \in S_K$, the holomorphic
	function $g(s)$ is uniformly bounded by $C$ on the compact set $\overline{U}$.
\end{enumerate}
\end{prop}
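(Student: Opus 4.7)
The strategy is a Montel-type compactness argument applied to $L^2$-bounded holomorphic sections, following the line-bundle model in \cite{HPS2018}, with the tameness condition on $(E,h)$ supplying uniform control across the degeneration locus of the metric. After trivializing $\omega_Y$ on $V$ (shrinking $V$ if necessary) by a nonvanishing form $dt_1\wedge\cdots\wedge dt_r$, each $s\in S_K$ corresponds to a holomorphic $E$-valued $n$-form $\beta_s=s\wedge(dt_1\wedge\cdots\wedge dt_r)\in H^0(f^{-1}(V),S_X(E,h))$, and Fubini together with the definition of the fiberwise metric $H$ gives
\begin{align*}
\int_{f^{-1}(V)}\{\beta_s,\beta_s\}_h=\int_V|s|_H^2\,d\mu\leq K
\end{align*}
uniformly in $s\in S_K$.

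The crucial step converts this singular $L^2$ bound into a uniform sup bound on compact subsets of $f^{-1}(V)$. Near any point of $X$, the tameness condition, combined with the coherence proof in Proposition \ref{prop_S_coherent}, locally realizes $S_X(E,h)$ as a coherent subsheaf of $Q\otimes\omega_X$, where $Q$ is a smooth Hermitian vector bundle with metric $h_Q$ satisfying $|z_1\cdots z_r|^{2m}h_Q\lesssim h$ on $X^o$ for a local defining equation $z_1\cdots z_r=0$ of the degeneration divisor. Writing $\beta_s=\sum_i f_i^{(s)}e_i\,dz_1\wedge\cdots\wedge dz_n$ in a local holomorphic frame $\{e_i\}$ of $Q$, coherence forces each $f_i^{(s)}$ to be holomorphic on the whole chart, and the $L^2$ bound yields
\begin{align*}
\sum_i\int|f_i^{(s)}|^2\,|z_1\cdots z_r|^{2m}\,{\rm vol}\lesssim K
\end{align*}
uniformly in $s$. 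The auxiliary holomorphic functions $g_i^{(s)}:=z_1^m\cdots z_r^m f_i^{(s)}$ are then uniformly $L^2$-bounded, hence uniformly bounded on compact subsets of the chart by the sub-mean-value inequality for $|g_i^{(s)}|^2$. Since each $g_i^{(s)}$ vanishes to order at least $m$ along every hyperplane $\{z_j=0\}$, a straightforward application of the Cauchy integral formula recovers a uniform sup bound on $f_i^{(s)}=g_i^{(s)}/(z_1^m\cdots z_r^m)$ on compact subsets, crucially including those meeting the degeneration locus.

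Montel's theorem together with a diagonal argument over a countable compact exhaustion of $f^{-1}(V)$ now extracts a subsequence $\beta_{s_k}$ converging uniformly on compact subsets to a holomorphic limit $\beta_\infty$, which Fatou's lemma keeps inside $S_X(E,h)$. Properness of $f$ transports this to uniform convergence $s_k\to s_\infty$ on compact subsets of $V$ as sections of $\sF$, establishing (1). Part (2) is then a short contradiction argument: if $\sup_{\overline{U}}|g(s_k)|\to\infty$ for some sequence $s_k\in S_K$, apply (1) to extract a subsequence converging uniformly on compact subsets of $V$ to some $s_\infty\in H^0(V,\sF)$; the holomorphic functions $g(s_k)$ then converge uniformly to the bounded function $g(s_\infty)$ on the compact set $\overline{U}\subset V$, contradicting the assumed divergence of their suprema.

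The principal technical obstacle is precisely the sup estimate across the degeneration locus in the second paragraph: a singular $L^2$ bound in $h$ controls sections only where the metric is nondegenerate, and it is exactly the tameness hypothesis that supplies the polynomial comparison with a smooth metric permitting the weighted Cauchy-based recovery of pointwise bounds across $\{z_1\cdots z_r=0\}$. Without tameness, the weighted $L^2$ bound gives no control on $f_i^{(s)}$ near the divisor, and the sup estimate, and hence the Montel compactness, would fail.
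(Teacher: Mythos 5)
There is a genuine gap at the central step of your second paragraph. You claim that each auxiliary function $g_i^{(s)}=z_1^m\cdots z_r^m f_i^{(s)}$ ``vanishes to order at least $m$ along every hyperplane $\{z_j=0\}$,'' and you use this, via the Cauchy integral formula, to get a uniform sup bound on $f_i^{(s)}$ on compact sets meeting the degeneration locus. This vanishing is unjustified and is false in general: a section $\beta\in S_X(E,h)$ only satisfies the weighted bound $\int\sum_i|f_i^{(s)}|^2|z_1\cdots z_r|^{2m}<\infty$, which forces $g_i^{(s)}$ to extend holomorphically across $\{z_1\cdots z_r=0\}$ but says nothing about its vanishing there; equivalently, $f_i^{(s)}$ may genuinely have poles of order up to $m$ along the divisor (already for a line bundle with $h=|z|^{2m}$ on the punctured disc, $f=z^{-m}$ is $L^2$). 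The same misunderstanding appears earlier when you assert that ``coherence forces each $f_i^{(s)}$ to be holomorphic on the whole chart.'' Consequently your asserted uniform sup bound on $f_i^{(s)}$, and with it the Montel extraction of a subsequence of $\beta_{s_k}$ converging uniformly on compact subsets of $f^{-1}(V)$ \emph{including} the degeneration locus, breaks down: the coefficients need not even be locally bounded there.

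The paper avoids exactly this by never claiming locally uniform convergence of the $f_i^{(s)}$ near the divisor. It applies the Montel-type compactness only to the honestly holomorphic, uniformly $L^2$-bounded functions $(z_1\cdots z_r)^m b_{n,i}$, then interprets the $b_{n,i}$ as sections of the twisted sheaf $\sO_{W'}(mD)$, where multiplication by $(z_1\cdots z_r)^{-m}$ is a continuous map of Fr\'echet spaces; convergence of the $\beta_n$ thus takes place in $H^0(W',\omega_{W'}\otimes\oplus_i\sO_{W'}(mD)s_i)$, and the limit stays in $H^0(W',S_{W'}(\pi^\ast E,\pi^\ast h))$ because sections of a coherent subsheaf form a closed subspace (your Fatou argument would also serve here). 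Convergence of the $s_k$ on compact subsets of $V$ is then deduced downstairs from this sheaf-theoretic convergence, rather than from pointwise bounds across the divisor. Note also that your local picture should be set up after a desingularization (as in the paper and in Proposition \ref{prop_S_coherent}), since $X\setminus X^o$ need not be a normal crossing divisor in the given coordinates. Your part (2) contradiction argument is fine once (1) is repaired along these lines.
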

\begin{proof}
For each section $s\in S_K$, we  define
$$\beta = s \otimes(dt_1\wedge \cdots \wedge dt_r)\in H^0(V,\omega_Y\otimes \sF)=H^0(f^{-1}(V),S_X(E,h)),$$
the corresponding holomorphic section of $S_X(E,h)$.
It satisfies that
$\|\beta\|_h^2=\int_V|s|_H^2d\mu\leq K$.
Because $\overline{V}$ is compact and $f$ is proper, we can cover $f^{-1}(V)$ with a finite number of open sets $W$ that are biholomorphic to the open unit ball in $\bC^n$.

Let $\pi:\widetilde{W}\to W$ be a desingularization such that $\pi$ is biholomorphic over $W^o:=W\cap X^o$, and $D:=\pi^{-1}(W\backslash W^o)$ is a simple normal crossing divisor. 
For the sake of simplicity, we consider $W^o$ as a subset of $\widetilde{W}$. Since $(E,h)$ is tame, we assume the existence of a $C^\infty$ Hermitian vector bundle $(Q,h_Q)$ on $\widetilde{W}$ such that $E$ is a subsheaf of $Q|_{W^o}$. Additionally, there exists an $m\in \mathbb{N}$ that satisfies the inequality
\begin{align}\label{align_tame_2}
	|z_1\cdots z_r|^{2m}h_Q\lesssim h
\end{align}
where $z_1,\cdots,z_n$ are local coordinates on $\widetilde{W}$ and $D=\{z_1\cdots z_r=0\}$.

To continue, we choose a set of holomorphic local frames $s_1,\dots,s_r$ of $Q$ on some open subset $W'\subset \widetilde{W}$. Let $h_0$ be the trivial metric associated with these frames, i.e.,  $(s_i,s_j)_{h_0}=\delta_{ij}$. Using this, we can write $\beta= \sum_{1\leq i\leq r}b_is_i \otimes dz_1\wedge \cdots \wedge dz_n$, where $b_i$ are holomorphic functions on $W'\cap W^o$.
Notice that the two Hermitian metrics $h_0$ and $h_Q$ are quasi-isometric, i.e., there exists some positive constant $C_1$ such that $\frac{1}{C_1}h_Q\leq h_0\leq C_1 h_Q$.
This leads to the following inequality:
\begin{align}\label{align_L2_imply_log}
	&\int_{W'} \sum_{1\leq i\leq r}|b_i|^2|z_1\cdots z_r|^{2m}(dx_1\wedge dy_1)\wedge \cdots \wedge (dx_n\wedge dy_n)\\\nonumber
	&\leq  C_1\int_{W'} \sum_{1\leq i\leq r}|b_i|^2|z_1\cdots z_r|^{2m}|s_i|^2_{h_Q}(dx_1\wedge dy_1)\wedge \cdots \wedge (dx_n\wedge dy_n)\stackrel{(\ref{align_tame_2})}{\leq} C_2\int_W |\beta|_h^2\leq C_2K,
\end{align}
for some positive constant $C_2$. In particular, the functions $(z_1\cdots z_r)^mb_i$ are holomorphic on $W'$.

Let $\{s_n\}$ be an arbitrary sequence in $S_K$. We denote by $\beta_n$ the corresponding holomorphic sections in $S_X(E,h)$ on $W'$. We can write $\beta_n$ as $\beta_n= \sum_{1\leq i\leq r}b_{n,i}s_i \otimes dz_1\wedge \cdots \wedge dz_n$. According to \cite[Proposition 12.5]{HPS2018}, by replacing it with a subsequence, we can assume that the sequence $\{(z_1\cdots z_r)^mb_{n,i}\}_{n\in\bN}$ converges uniformly on compact subsets in $H^0(W',\sO_{W'})$.
Notice that the natural injective morphism $\sO_{W'}\rightarrow \sO_{W'}(mD)$ induces an injective continuous mapping $$\iota:=\times(z_1\cdots z_r)^{-m}:H^0(W',\sO_{W'})\rightarrow H^0(W',\sO_{W'}(mD))$$ between Fr\'echet spaces (\cite[Ch. VIII,\S A]{GR2009}). 
Therefore, the sequence $\{b_{n,i}=\iota((z_1\cdots z_r)^mb_{n,i})\}$  converges to $b_{\infty,i}\in H^0(W',\sO_{W'}(mD))$.

Notice that (\ref{align_L2_imply_log}) implies that $S_{W'}(\pi^\ast E,\pi^\ast h) \subset \omega_{W'}\otimes \oplus_{i=1}^r\sO_{W'}(mD)s_i$, we conclude that $H^0(W', S_{W'}(\pi^\ast E,\pi^\ast h))$ is a closed subspace of $H^0(W', \omega_{W'}\otimes \oplus_{i=1}^r\sO_{W'}(mD)s_i)$ (\cite[Proposition VIII. A.2]{GR2009}). Therefore, the argument on the previous paragraph shows that $\beta_n$ converges to $\beta_\infty= \sum_{1\leq i\leq r}b_{\infty,i}s_i \otimes dz_1\wedge \cdots \wedge dz_n\in H^0(W', S_{W'}(\pi^\ast E,\pi^\ast h))$.
Since we are dealing with finitely many open sets, the corresponding sequence $\{\beta_n\}$ of every sequence $\{s_n\}$ in $S_K$ has a subsequence that converges uniformly on compact subsets to some $\beta_\infty\in H^0(f^{-1}(V),S_X(E,h))$. Let $s_\infty\in H^0(V,\sF)$ be the unique section such that $$\beta_\infty=s_\infty\otimes (dt_1\wedge \cdots\wedge dt_r).$$
Based on \cite[Proposition VIII. A.2]{GR2009}, the sequence $s_k$ converges to $s_\infty$ in the Fr\'echet space topology on $H^0(V, \sF)$. Thus (1) is proved.

We will prove second claim of the lemma by contradiction. Let us assume that $g(s)$ is not uniformly bounded on the compact set $\overline{U}$ for all $s\in S_K$. This implies that there exists a sequence $s_0, s_1, s_2, \dots\in S_K$ such that the maximum value of $|g(s_k)|$ on the compact set $\overline{U}$ is at least $k$. 
It follows from (1) that, by passing to a subsequence if necessary, the sequence $s_k$ converges to  some section $s_\infty\in H^0(V, \sF)$. As the map $g:H^0(V,\sF) \rightarrow  H^0(V,\sO_Y)$ is a continuous map, the holomorphic functions $g(s_k)$ then converge uniformly on compact subsets to $g(s_\infty)$. Consequently, $|g(s_k)|$ must be uniformly bounded on $\overline{U}$, contradicting the previous assumption.

	\end{proof}
\begin{prop}\label{prop_usc}
	For every $g\in H^0(Y,\sF^\ast)$, the function $\psi= \log |g|_{H^\ast}$ is upper
	semi-continuous on $Y\setminus Z$.
\end{prop}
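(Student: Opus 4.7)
The plan is to verify upper semi-continuity sequentially. Fix $y_0 \in Y \setminus Z$ and an arbitrary sequence $y_k \to y_0$ in $Y \setminus Z$; after passing to a subsequence, suppose $\psi(y_k) \to L$, and the goal is to show $L \leq \psi(y_0)$ (the case $L = -\infty$ is trivial). By the definition of the dual norm $H^\ast$, for each $k$ I pick $\alpha_k \in F_{y_k}$ with $|\alpha_k|_{H, y_k} = 1$ and $|g(\alpha_k)| \to e^L$.

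Next, shrink to a coordinate neighborhood $V$ of $y_0$ biholomorphic to an open ball, and for each large $k$ use a rescaled translate $\iota_k : B \to Y$ of a fixed embedding so that $\iota_k(0) = y_k$ and $\iota_k(B) \subset V$. Lemma \ref{lem_OT} produces $s_k \in H^0(\iota_k(B), \sF)$ with $s_k(y_k) = \alpha_k$ and a uniform $L^2$ bound on $s_k$. Restricting to a smaller common open set $V' \Subset V$ around $y_0$ contained in $\iota_k(B)$ for $k$ large and applying Proposition \ref{prop_compactness}(1) yields a subsequence converging to some $s_\infty \in H^0(V', \sF)$ in the Fr\'echet topology. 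Continuity of the sheaf map $g : \sF \to \sO_Y$ in this topology then gives $g(s_k)(y_k) \to g(s_\infty)(y_0)$, so $|g(s_\infty(y_0))| = e^L$. By the definition of the dual norm, $|g(s_\infty(y_0))| \leq |g|_{H^\ast, y_0} \cdot |s_\infty(y_0)|_{H, y_0} = e^{\psi(y_0)} \cdot |s_\infty(y_0)|_{H, y_0}$, so the conclusion $L \leq \psi(y_0)$ will follow as soon as $|s_\infty(y_0)|_{H, y_0} \leq 1$ is established.

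I expect this last pointwise inequality to be the main obstacle, because it amounts to a joint lower semi-continuity of $(y, s) \mapsto |s(y)|_{H, y}^2$ along $(y_k, s_k) \to (y_0, s_\infty)$, while $H$ is only a singular Hermitian metric on $F$. The plan is to exploit the fiber-integral formula $|s(y)|_{H, y}^2 = \int_{X_y} \{\beta_s, \beta_s\}_h$ together with Definition \ref{defn_singularNakanopositive}: the metric $h$ on $E$ is a pointwise increasing limit of $C^2$ Hermitian metrics $h_{j, m}$, so by monotone convergence $|s(y)|_{H, y}^2$ is an increasing limit of integrals against $h_{j, m}$ which, after trivializing $f$ smoothly via Ehresmann's theorem near $y_0$, depend jointly continuously on $(y, s)$ in the Fr\'echet topology. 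Hence $|\cdot|_H$ is jointly lsc, and $|s_\infty(y_0)|_{H, y_0}^2 \leq \liminf_k |s_k(y_k)|_{H, y_k}^2 = 1$. Combined with the previous paragraph this yields $e^L \leq e^{\psi(y_0)}$, completing the proof.
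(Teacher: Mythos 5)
Your proposal is correct and follows essentially the same route as the paper: extract near-optimal vectors $\alpha_k$ at $y_k$, extend them by Lemma \ref{lem_OT} with uniform $L^2$ bounds, pass to a convergent subsequence via Proposition \ref{prop_compactness}, and reduce everything to the bound $|s_\infty(y_0)|_{H,y_0}\leq 1$. Your final step (joint lower semicontinuity of the fiberwise norm via the increasing $C^2$ approximations of $h$ and monotone convergence) is just a repackaging of the paper's argument, which establishes the same bound by writing the fiber norms as integrals of lower semi-continuous densities $G_k$, $G$ on $B\times X_0$ and applying Fatou's lemma.
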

\begin{proof}
Given an arbitrary point $y\in Y\setminus Z$, we can assume that $Y = B$, $Z = \emptyset$, and $y = 0$ by choosing a sufficiently small open neighborhood of $y$. In this case, $g\in H^0(B,F^\ast)$, and we just need to show the upper semi-continuity of $\psi= \log|g|_{H^\ast}$ at the origin.  Equivalently, we need to show that
	\begin{align}\label{align_usc}
		\limsup_{k\rightarrow +\infty}\psi(y_k)\leq \psi(0)
	\end{align}
	for every sequence $\{y_k\}\in B$ converging to the origin. We may assume that $\psi(y_k)\neq -\infty$ for all $k\in \bN$, and that the sequence $\psi(y_k)$ converges. By the definition of the metric on the dual bundle, there is a holomorphic section $s_k\in H^0(B,F)$ for each $k\in \bN$, such that $\psi(y_k)=\log |g(s_k)|_{y_k}$. By Lemma \ref{lem_OT}, we can choose these sections such that $|s_k(y_k)|_{H, y_k} = 1$ and
	$$\int_B|s_k|^2_Hd\mu\leq K$$
	for some constant $K\geq 0$. If necessary, we can pass to a subsequence and let $s_k$ converge uniformly on compact subsets to some $s\in H^0(B,F)$ using Proposition \ref{prop_compactness}. Then, the holomorphic functions $g(s_k)$ uniformly converge on compact subsets to $g(s)$. To prove (\ref{align_usc}), we must show $\log |g(s(0))|\leq \psi(0)$. The definition of the dual metric $H^\ast$ implies that
	$$\psi=\log |g|_{H^\ast}\geq \log |g(s)|-\log |s|_H.$$
	Therefore, it is equivalent to proving that $|s(0)|_{H}\leq 1$.
According to the discussions in \S 3.1, there is a lower semi-continuous function $G_k: B \times X_0 \rightarrow [0,+\infty)$ associated to each $s_k$, such that 
$$
1 = |s_k(y_k)|_{H,y_k}^2 = \int_{X_0} G_k(y_k,-) \frac{\omega_0^{n-r}}{(n-r)!}.
$$

Similarly, the section $s$ determines a lower semi-continuous function $G: B \times X_0 \rightarrow [0,+\infty)$. By Fatou's lemma, we can deduce that 
\begin{align}
	|s(0)|_{H}^2=\int_{X_0} G(0,-) \frac{\omega_0^{n-r}}{(n-r)!}\leq \int_{X_0}\liminf_{k \rightarrow +\infty} G_k(y_k,-) \frac{\omega_0^{n-r}}{(n-r)!}\leq \liminf_{k \rightarrow +\infty}\int_{X_0} G_k(y_k,-) \frac{\omega_0^{n-r}}{(n-r)!}=1.
\end{align}

Therefore, the proof is finished.

\end{proof}


By referring to Proposition \ref{prop_boundedness} and Proposition \ref{prop_usc}, verifying that $\psi$ satisfies the mean value inequalities, as derivable from Lemma \ref{lem_OT}, is all that's required to show that $\psi$ extends to a plurisubharmonic function on $Y$.
\begin{prop}
	 For every holomorphic mapping $\gamma:\Delta\rightarrow Y\setminus Z$, the function $\psi= \log |g|_{H^\ast}$ satisfies the mean-value inequality
$$(\psi\circ\gamma)(0)\leq \frac{1}{\pi}\int_{\Delta}(\psi\circ \gamma)d\mu.$$
Here $\Delta$ denotes the unit disc in $\bC$.
\end{prop}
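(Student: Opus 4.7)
The plan is to pull the fibration back along $\gamma$ so that the disc becomes the base, then apply the Ohsawa--Takegoshi extension of Lemma~\ref{lem_OT} to an almost-extremal vector at $\gamma(0)$, and finally combine the subharmonicity of $\log$ of a holomorphic function with Jensen's inequality. First I would reduce to the case $Y=\Delta$ and $\gamma=\mathrm{id}$ by setting $\tilde X:=X\times_Y\Delta$ with projection $\tilde f\colon \tilde X\to\Delta$, and letting $(\tilde E,\tilde h)$ be the pullback of $(E,h)$ to $\tilde X^o:=X^o\times_Y\Delta$. Because $\gamma(\Delta)\subset Y\setminus Z$, the map $\tilde f$ is smooth and surjective, $\tilde X^o$ meets every fiber, and properties (3)--(4) of $Y\setminus Z$ yield a metric-preserving identification $\gamma^\ast\sF\simeq \tilde f_\ast S_{\tilde X/\Delta}(\tilde E,\tilde h)$, under which $\tilde\psi:=\psi\circ\gamma$ corresponds to $\log|\gamma^\ast g|_{\tilde H^\ast}$. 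The case $\tilde\psi(0)=-\infty$ is trivial, and Proposition~\ref{prop_boundedness} gives finiteness from above, so we may assume $\tilde\psi(0)\in\bR$.

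Fix $\varepsilon>0$ and choose $\alpha\in\tilde F_0$ with $|\alpha|_{\tilde H,0}=1$ and $\log|\gamma^\ast g(\alpha)|\geq\tilde\psi(0)-\varepsilon$. Applying Lemma~\ref{lem_OT} to $\tilde f\colon\tilde X\to\Delta$ produces a holomorphic section $s\in H^0(\Delta,\tilde\sF)$ with $s(0)=\alpha$ and $\tfrac{1}{\pi}\int_\Delta|s|_{\tilde H}^2\,d\mu\leq 1$. The function $u:=\gamma^\ast g(s)$ is holomorphic on $\Delta$, so $\log|u|$ is subharmonic. Combining the mean-value inequality with the pointwise bound $|u(t)|\leq |\gamma^\ast g|_{\tilde H^\ast,t}\cdot|s|_{\tilde H,t}=e^{\tilde\psi(t)}|s|_{\tilde H,t}$ and Jensen's inequality for the concave function $\log$, I obtain
\begin{align*}
\tilde\psi(0)-\varepsilon \;\leq\; \log|u(0)| \;\leq\; \frac{1}{\pi}\int_\Delta\log|u|\,d\mu \;\leq\; \frac{1}{\pi}\int_\Delta\tilde\psi\,d\mu \;+\; \frac{1}{2}\log\!\Bigl(\tfrac{1}{\pi}\int_\Delta|s|_{\tilde H}^2\,d\mu\Bigr) \;\leq\; \frac{1}{\pi}\int_\Delta\tilde\psi\,d\mu.
\end{align*}
Letting $\varepsilon\to 0$ yields the claimed mean-value inequality.

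The principal technical point is the smooth base change in the first step: one has to verify that $(\tilde E,\tilde h)$ inherits Nakano semi-positivity in the sense of approximations and tameness, and that base change $\gamma^\ast\sF\simeq \tilde f_\ast S_{\tilde X/\Delta}(\tilde E,\tilde h)$ holds \emph{isometrically} with the fiber-integrated metric, so that Lemma~\ref{lem_OT} applies verbatim to $\tilde f$. All of these should be routine given the submersivity of $f$ over $Y\setminus Z$ and the fiberwise definition of the $L^2$ metric, but they require careful bookkeeping of the metric identifications; this is the main thing that would need to be made precise in a full write-up.
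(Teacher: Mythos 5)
Your proposal is correct and follows essentially the same route as the paper: reduce to the case of a family over $\Delta$, apply the Ohsawa--Takegoshi extension (Lemma~\ref{lem_OT}) to an extremal (or $\varepsilon$-extremal) vector at the origin, bound $\psi$ below by $\log|g(s)|-\log|s|_H$, and combine subharmonicity of $\log|g(s)|$ with Jensen's inequality for the $\log|s|_H$ term. The only difference is cosmetic: you spell out the base change along $\gamma$ via the fiber product, which the paper performs implicitly when it "assumes $Y=\Delta$."
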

\begin{proof}
	Since the inequality holds when $h\equiv+\infty$, we can assume that $h$ is not identically equal to $+\infty$. As the mapping $f: X\rightarrow Y$ is a submersion over $Y\setminus Z$, we can simplify the problem by considering the case when $Y = \Delta$. If $\psi(0) =-\infty$, then the mean-value inequality is always true. Assuming that $\psi(0)\neq-\infty$, we can select an element $\alpha\in F_0$, where $|\alpha|_{H,0} =1$, such that $$\psi(0) = \log |g|_{H^\ast, 0} =\log |g(\alpha)|.$$
By Lemma \ref{lem_OT}, there exists a holomorphic section $s\in H^0(\Delta, F)$ with $s(0) = \alpha$ and $\frac{1}{\pi}\int_{\Delta}|s|_H^2d\mu\leq 1$. With the definition of the metric $H^\ast$ on the dual bundle, we can derive the inequality:
$$|g|_{H^\ast} \geq \frac{|g(s)|}{|s|_H}.$$
This inequality yields $2\psi \geq \log |g(s)|^2 - \log |s|^2_H$. Integrating both sides leads to:
$$\frac{1}{\pi}\int_{\Delta}2\psi d\mu \geq \frac{1}{\pi}\int_{\Delta}\log |g(s)|^2d\mu - \frac{1}{\pi}\int_{\Delta}\log |s|_H^2d\mu.$$
Because $\log |g(s)|^2$ follows the mean-value inequality, the first term on the right side is at least $\log |g(\alpha)|^2 = 2 \psi(0)$. Further, the function $x\mapsto -\log x$ is convex, and the function $|s|_H^2$ is integrable. Hence, we can bound the second term using Jensen's inequality as:
$$-\log\left(\frac{1}{\pi}\int_\Delta |s|_H^2d\mu\right)\geq -\log 1=0.$$ 
Combining both results, we we reach the conclusion:
$$\frac{1}{\pi}\int_{\Delta}\psi d\mu\geq \psi(0),$$
establishing the mean-value inequality.
\end{proof}
To summarize, the function $\psi$ is plurisubharmonic on $Y\setminus Z$ and is locally bounded on $Y$. Consequently, it it extends to a plurisubharmonic function on the entire space $Y$. Furthermore, the singular Hermitian metric $H$ extends to the torsion-free coherent sheaf $\mathcal{F}$, and the minimal extension property is a direct consequence of Lemma \ref{lem_OT}. With this, the proof of Theorem \ref{thm_main} is concluded.
			\bibliographystyle{plain}
			\bibliography{positivity}
			
		\end{document}